\newtheorem*{rep@theorem}{\rep@title}
\newcommand{\newreptheorem}[2]{%
\newenvironment{rep#1}[1]{%
 \def\rep@title{#2 \ref{##1}}%
 \begin{rep@theorem}}%
 {\end{rep@theorem}}}
\newtheorem{theorem}{Theorem}
\newtheorem{lemma}{Lemma}
\theoremstyle{remark}
\newtheorem{example}{Example}
\newtheorem{remark}{Remark}
\newcommand{\ignore}[1]{}
\newcommand{\bi}{\leftrightarrow}
\newcommand{\RDreg}{\mathbb{R}^{D}_{\text{reg}}}
\newcommand{\trace}{{\rm trace}}
\newcommand{\obs}[1]{{\rm mlt}_0(#1)}
\newcommand{\obsmu}[1]{{\rm mlt}(#1)}
\newcommand{\M}{\mathit{PD}}
\numberwithin{equation}{section}
\newcommand{\pa}{{\rm pa}}       
\newcommand{\Pa}{{\rm Pa}}       
\begin{document}

\begin{frontmatter}

\title{The Maximum Likelihood Threshold of a Path Diagram}
\runtitle{Maximum Likelihood Threshold of a Path Diagram}

\begin{aug}
  \author{\fnms{Mathias} \snm{Drton}\thanksref{m1}\ead[label=e1]{md5@uw.edu}},
  \author{\fnms{Christopher}
    \snm{Fox}\thanksref{m2}\ead[label=e2]{chrisfox@uchicago.edu}},
  \author{\fnms{Andreas} \snm{K\"{a}ufl}\thanksref{m3} \ead[label=e3]{andreas.kaeufl@googlemail.com}}
  \and
  \author{\fnms{Guillaume} \snm{Pouliot}\thanksref{m4} \ead[label=e4]{guillaume.pouliot@gmail.com}}

\address{Department of Statistics\\
University of Washington\\
Seattle,   WA, U.S.A.\\
\printead{e1}\\}
\address{Department of Statistics\\
The University of Chicago\\
Chicago, IL, U.S.A.\\
\printead{e2}\\
\phantom{E-mail: {\rm guillaume.pouliot@gmail.com}}
}
\address{Institute for Mathematics\\
University of Augsburg\\
Augsburg, Germany\\
\printead{e3}\\}
\address{Harris School of Public Policy\\
The University of Chicago\\
Chicago, IL, U.S.A.\\[0.1cm]
Department of Mathematics\\
 and Industrial Engineering\\
\'Ecole Polytechnique de Montr\'eal\\
Montr\'eal, QC, Canada\\
\printead{e4}\\}

\runauthor{M. Drton et al.}

\affiliation{University of Washington\thanksmark{m1}, University of Chicago\thanksmark{m2}\thanksmark{m4},
  University of Augsburg\thanksmark{m3} and \'Ecole Polytechnique de Montr\'eal\thanksmark{m4}}

\end{aug}

\begin{abstract}
Linear structural equation models postulate noisy linear relationships
between variables of interest.  Each model corresponds to a path
diagram, which is a mixed graph with directed edges that encode the
domains of the linear functions and bidirected edges that indicate
possible correlations among noise terms.  Using this graphical
representation, we determine the maximum likelihood threshold, that
is, the minimum sample size at which the likelihood function of a
Gaussian structural equation model is almost surely bounded.  Our
result allows the model to have feedback loops and is based on
decomposing the path diagram with respect to the connected components
of its bidirected part.  We also prove that if the sample size is
below the threshold, then the likelihood function is almost surely
unbounded.  Our work clarifies, in particular, that standard
likelihood inference is applicable to sparse high-dimensional models
even if they feature feedback loops.
\end{abstract}

\begin{keyword}[class=MSC]
\kwd{62H12}
\kwd{62J05}
\end{keyword}

\begin{keyword}
\kwd{Covariance matrix}
\kwd{graphical model}
\kwd{maximum likelihood}
\kwd{normal distribution}
\kwd{path diagram}
\kwd{structural equation model}
\end{keyword}
\end{frontmatter}

	
\section{Introduction}\label{sec:intro}

Structural equation models are multivariate statistical models that
treat each variable of interest as a function of the remaining
variables and a random error term. Linear structural equation models require
all these functions to be linear.  Let $X = (X_1,\ldots,X_p)$ be the random
vector holding the considered variables.  Then $X$ solves the equation system
\begin{equation}
	X_i = \lambda_{0i}+\sum_{j \neq i} \lambda_{ij} X_j + \epsilon_i, \quad i = 1,\ldots,p,
\label{eq:structuraleqs}
\end{equation}
where $\epsilon = (\epsilon_1,\ldots,\epsilon_p)$ is a given $p$-dimensional
random error vector, and the $\lambda_{0i}$ and $\lambda_{ij}$ are
unknown parameters.  Let $\Lambda_0=(\lambda_{01},\dots,\lambda_{0p})$
and form the matrix
$\Lambda = (\lambda_{ij})\in\mathbb{R}^{p\times p}$ by setting the
diagonal entries to zero.  Following the frequently made Gaussian
assumption, assume that $\epsilon$ is centered $p$-variate normal with
covariance matrix $\Omega = (\omega_{ij})$.  Writing $I$ for
the 
identity matrix,~(\ref{eq:structuraleqs}) yields that
$X=(I-\Lambda)^{-T}\epsilon$ is multivariate normal with covariance
matrix
\begin{equation}
	\Sigma = (I-\Lambda)^{-T} \Omega (I-\Lambda)^{-1}.
\label{eq:covariancematrix}
\end{equation}
Here, and throughout the paper, the matrix $I-\Lambda$ is required to be
invertible.

The Gaussian models just introduced have a long tradition
\citep{wright:1921,wright:1934} but remain an important tool for
modern applications
\citep[e.g.,][]{maathuis-colombo-kalisch-2009,nature}.  Their
popularity is driven by causal interpretability
\citep{pearl:2009,spirtes:2000} as well as favorable statistical
properties that facilitate analysis of highly multivariate data.  In
this paper, we focus on the fact that if the matrices $\Lambda$ and
$\Omega$ are suitably sparse, then maximum likelihood estimates in
high-dimensional models may exist at small sample sizes.  This
enables, for instance, the use of likelihood in stepwise model selection.
It can often be expected that $\Lambda$ is sparse because each
variable $X_i$ depends on only a few of the other variables $X_j$,
$j\not=i$.  Similarly, the number of nonzero off-diagonal entries of
$\Omega$ is small unless many pairs of error terms $\epsilon_i$ and
$\epsilon_j$ are correlated through a latent common cause for $X_i$
and $X_j$.
We encode assumptions of sparsity in $(\Lambda,\Omega)$ in a graphical
framework advocated by \cite{wright:1921,wright:1934}.  Our
terminology follows conventions from the book of
\cite{lauritzen:1996}, the review of
\cite{hibi}, and other related work such as
\cite{foygel:draisma:drton:2012} or \cite{evans:2016}.

\subsection*{Background}

A \emph{mixed graph} is a triple $\mathcal{G}=(V,D,B)$ such that
$D\subseteq V\times V$ and $B$ is a set containing 2-element subsets
of $V$.    Throughout the paper, we take the
vertex set to be $V = \{1,\ldots,p\}$ such that the nodes in
$V$ index the given random variables $X_1,\dots,X_p$.  The
pairs $(i,j)\in D$ are \emph{directed edges} that we denote as
$i \to j$.   Node $j$ is the head of such an edge.  We always
assume that there are no self-loops, that is, $i \to i \notin D$ for
all $i\in V$.  The elements $\{i,j\}\in B$ are \emph{bidirected edges}
that have no orientation; we write such an edge as $i \bi j$ or
$j\bi i$.  Two nodes $i,j\in V$ are \emph{adjacent} if $i\bi j\in B$
or $i\to j\in D$ or $j\to i\in D$.

Let $\mathcal{G}'=(V',D',B')$ be another mixed graph.  If $V'\subseteq V$,
$D'\subseteq D$, and $B'\subseteq B$, then $\mathcal{G}'$ is a \emph{subgraph}
of $\mathcal{G}$, and $\mathcal{G}$ \emph{contains} $\mathcal{G}'$.  If
$V'=\{i_0,i_1,\dots,i_k\}$ for distinct $i_0,i_1,\dots,i_k$ and there
are $|D'|+|B'|=k$ edges such that any two consecutive nodes $i_{h-1}$
and $i_h$ are adjacent in $\mathcal{G}'$, then $\mathcal{G}'$ is a \emph{path} from $i_0$
to $i_k$.  It is a \emph{directed path} if $i_{h-1}\to i_h$ for all
$h$.  Adding the edge $i_k\to i_0$ gives a \emph{directed cycle}.

A mixed graph $\mathcal{G}$ is \emph{connected} if it contains a path
from any node $i$ to any other node $j$.  A \emph{connected component}
of $\mathcal{G}$ is an inclusion-maximal connected subgraph.  In other
words, a subgraph $\mathcal{G}'$ is a connected component of
$\mathcal{G}$ if $\mathcal{G}'$ is connected and every subgraph of
$\mathcal{G}$ that strictly contains $\mathcal{G}'$ fails to be
connected.  If $\mathcal{G}$ does not contain any directed cycles,
then it is \emph{acyclic}.  If it has only directed edges
($B=\emptyset$), then $\mathcal{G}$ is a \emph{digraph}.  The
graphical modeling literature refers to an \emph{acyclic digraph} also
as directed acyclic graph, abbreviated to DAG.

Now, let $\mathbb{R}^D$ be the space of real $p \times p$ matrices
$\Lambda = (\lambda_{ij})$ with $\lambda_{ij}=0$ whenever
$i \to j \notin D$, and write $\RDreg$ for the subset of matrices
$\Lambda\in\mathbb{R}^D$ with $I-\Lambda$ invertible.  Note that
$\mathbb{R}^D = \RDreg$ if and only if $\mathcal{G}$ is {acyclic}.
Let $\M(B)$ be the cone of positive definite
$p \times p$ matrices $\Omega=(\omega_{ij})$ with $\omega_{ij} = 0$
when $i \neq j$ and $i \bi j \notin B$.  Then the linear structural
equation model given by $\mathcal{G}$ is the set of multivariate normal
distributions $\mathcal{N}(\mu,\Sigma)$ with mean vector
$\mu\in\mathbb{R}^p$ and covariance matrix $\Sigma$ in
\begin{equation}
\M(\mathcal{G}) \;=\; \{ (I-\Lambda)^{-T}\Omega(I-\Lambda)^{-1}:
(\Lambda,\Omega)\in \RDreg\times \M(B)\}.
\label{eq:model}
\end{equation}
We remark that the graph $\mathcal{G}$ is also known as the \emph{path diagram}
of the model.

\subsection*{Maximum likelihood threshold}

Suppose now that we have independent and identically distributed
multivariate observations
$X^{(1)},\ldots,X^{(n)}\sim\mathcal{N}(\mu,\Sigma)$.  Let
\begin{equation}
  \label{eq:sample-mean-cov}
\bar X_n \;=\; \frac{1}{n}\sum_{s=1}^n X^{(s)} \quad\text{and}\quad S_n
\;=\; \frac{1}{n}\sum_{s=1}^n(X^{(s)}-\bar X_n)(X^{(s)}-\bar X_n)^T
\end{equation}
be the sample mean vector and sample covariance matrix.  With
an additive constant omitted and $n/2$ divided out, the log-likelihood
function is
\begin{equation*}
  \ell(\mu,\Sigma\,|\, \bar X_n,S_n) = - \log\det(\Sigma) -
  \trace\left(\Sigma^{-1} S_n\right) - (\bar
  X_n-\mu)^T\Sigma^{-1}(\bar X_n-\mu).
\end{equation*}
The considered models have the mean vector unrestricted and the
maximum likelihood estimator of $\mu$ is always $\bar X_n$.  This
yields the profile log-likelihood function
\begin{equation}\label{eq:likelihood}
  \ell(\Sigma\,|\, S_n) = - \log\det(\Sigma) -
  \trace\left(\Sigma^{-1} S_n\right).
\end{equation}

Our interest is in determining, for a mixed graph $\mathcal{G} = (V,D,B)$, the
minimum number $N$ such that for a sample of size $n\ge N$ 
the log-likelihood function is almost surely bounded above on the set
$\mathbb{R}^p \times \M(\mathcal{G})$.  As usual, almost surely refers to
probability one when $X^{(1)},\ldots,X^{(n)}$ are an independent
sample from a regular multivariate normal distribution, or
equivalently, any other absolutely continuous distribution on
$\mathbb{R}^p$.  Let
\[
  \hat\ell(\mathcal{G}\,|\,S_n) \;=\; \sup \left\{ \ell(\Sigma\,|\,S_n) :
    \Sigma\in\M(\mathcal{G})\right\}. 
\]
Adapting terminology from \cite{Sullivant:Gross}, the number we seek
to derive is the \emph{maximum likelihood threshold}
\begin{equation}
  \label{eq:obsG}
  \obsmu{\mathcal{G}} \;:=\; \min \left\{N\in\mathbb{N} \,:\,  
    \,\hat\ell(\mathcal{G}\,|\,
    S_n)  <\infty \;\; \text{a.s.} \;\;   \forall n\ge N
\right\}.
\end{equation}
Here and throughout, a.s.~abbreviates almost surely.  

If we constrain the mean vector $\mu$ to be zero, then the relevant
sample covariance matrix is
\begin{equation}
  S_{0,n}=\frac{1}{n} \sum_{s=1}^n X^{(s)}(X^{(s)})^T.
  \label{eq:Sn0}
\end{equation}
By classical results \citep[Chap.~7]{MR1990662}, $\obsmu{\mathcal{G}}=\obs{\mathcal{G}}+1$, where
\begin{equation}
  \label{eq:obs0G}
  \obs{\mathcal{G}} \;=\; \min \left\{N\in\mathbb{N} \,:\,  \hat\ell(\mathcal{G}\,|\, S_{0,n}) <\infty \;\;\text{a.s.} \;\;   \forall n\ge N\right\}
\end{equation}
is the maximum likelihood threshold for the model when the mean vector
is taken to be zero.
Our subsequent discussion will thus focus on the threshold $\obs{\mathcal{G}}$.
We record three simple yet useful facts.  

\begin{lemma}
  \label{lem:saturated}
  Let $\mathcal{G}=(V,D,B)$ be a mixed graph.  Then
  \begin{enumerate}
  \item[(a)] $\obs{\mathcal{G}} \;\le\; p=|V|$.
  \item[(b)] If $\mathcal{G}_1,\dots,\mathcal{G}_k$ are the connected components of $\mathcal{G}$, then
    \[
      \obs{\mathcal{G}} \;=\; \max_{j=1,\dots,k} \,\obs{\mathcal{G}_j}.
    \]
  \item[(c)] If $\mathcal{H}$ is a subgraph of $\mathcal{G}$, then $\obs{\mathcal{H}}\le\obs{\mathcal{G}}$.
  \end{enumerate}
\end{lemma}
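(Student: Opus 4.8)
The plan is to prove the three parts essentially independently, with part (a) doing most of the conceptual work. For part (a), the idea is that the full saturated model of all positive definite $p\times p$ matrices is contained in no model $\M(\mathcal{G})$ in general, but the saturated path diagram with $B = \binom{V}{2}$ and $D = \emptyset$ realizes every positive definite $\Sigma$ via $\Omega = \Sigma$, $\Lambda = 0$; more to the point, for \emph{any} $\mathcal{G}$ the set $\M(\mathcal{G})$ is contained in the cone of all positive definite matrices, so $\hat\ell(\mathcal{G}\mid S_{0,n}) \le \sup\{\ell(\Sigma\mid S_{0,n}) : \Sigma \succ 0\}$, and the latter supremum is finite a.s.\ as soon as $S_{0,n}$ is positive definite, which by absolute continuity happens a.s.\ once $n \ge p$. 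Concretely, when $S_{0,n} \succ 0$ the function $\Sigma \mapsto -\log\det\Sigma - \trace(\Sigma^{-1}S_{0,n})$ on the PD cone attains its maximum at $\Sigma = S_{0,n}$ (a standard computation: the gradient vanishes there and the function is concave in $\Sigma^{-1}$), with finite value $\log\det(S_{0,n}^{-1}) - p$. Hence $\obs{\mathcal{G}} \le p$.

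For part (b), I would use the block structure induced by the connected components. After relabeling, $V$ partitions as $V_1 \disjoint \cdots \disjoint V_k$ with no directed or bidirected edges between blocks, so both $\Lambda$ and $\Omega$ are block diagonal, hence $I - \Lambda$ and therefore $\Sigma = (I-\Lambda)^{-T}\Omega(I-\Lambda)^{-1}$ are block diagonal with the $j$-th block ranging exactly over $\M(\mathcal{G}_j)$ (interpreted on the coordinates $V_j$). Writing $S_{0,n}[V_j]$ for the corresponding principal submatrix of $S_{0,n}$, the log-likelihood \eqref{eq:likelihood} splits as a sum $\ell(\Sigma\mid S_{0,n}) = \sum_{j=1}^k \ell\bigl(\Sigma[V_j] \mid S_{0,n}[V_j]\bigr)$ over the blocks, because $\log\det$ and $\trace$ are additive over block-diagonal matrices and $\Sigma^{-1}$ is block diagonal with blocks $\Sigma[V_j]^{-1}$. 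Since the blocks of $\Sigma$ vary independently over $\M(\mathcal{G}_j)$, the supremum of the sum is the sum of the suprema, so $\hat\ell(\mathcal{G}\mid S_{0,n}) = \sum_j \hat\ell(\mathcal{G}_j \mid S_{0,n}[V_j])$; this is finite iff each summand is finite. The last point to check is that a sample $X^{(1)},\dots,X^{(n)}$ from an absolutely continuous law on $\mathbb{R}^p$ restricts on the coordinates $V_j$ to a sample from an absolutely continuous law on $\mathbb{R}^{V_j}$, so the a.s.\ statement for $\mathcal{G}$ is equivalent to the conjunction of the a.s.\ statements for the $\mathcal{G}_j$, and taking the max over $j$ of the per-component thresholds gives the claimed identity.

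For part (c), note $\mathcal{H} = (V', D', B')$ with $V' \subseteq V$, $D' \subseteq D$, $B' \subseteq B$. Because the log-likelihood threshold only depends on the variables indexed by $V'$, and by part (b) adding isolated vertices (those in $V \setminus V'$) does not increase the threshold beyond $\max$ with the trivial one-vertex threshold, it suffices to treat the case $V' = V$ where $\mathcal{H}$ is obtained from $\mathcal{G}$ by deleting edges. Deleting edges shrinks the parameter space: $\RDreg$ for $\mathcal{H}$ is contained in that for $\mathcal{G}$ and likewise $\M(B') \subseteq \M(B)$, hence $\M(\mathcal{H}) \subseteq \M(\mathcal{G})$, so $\hat\ell(\mathcal{H}\mid S_{0,n}) \le \hat\ell(\mathcal{G}\mid S_{0,n})$ pointwise; therefore finiteness a.s.\ for $\mathcal{G}$ at sample size $n$ forces finiteness a.s.\ for $\mathcal{H}$, giving $\obs{\mathcal{H}} \le \obs{\mathcal{G}}$.

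The main obstacle is the bookkeeping in part (b): one has to be careful that ``almost surely'' is preserved under passing to subvectors (which is immediate from the definition in the excerpt, since marginals of absolutely continuous distributions on $\mathbb{R}^p$ are absolutely continuous on the coordinate subspace) and that the block-diagonal parametrization of $\M(\mathcal{G})$ genuinely decouples — this last point hinges on the fact that a block-diagonal $I - \Lambda$ is invertible iff each diagonal block is, so $\RDreg$ itself factors as a product over the components. None of the steps requires hard analysis; the content is entirely in correctly translating the graph decomposition into a product decomposition of the parameter cone and an additive decomposition of the objective.
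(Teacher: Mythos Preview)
Your proposal is correct and follows essentially the same approach as the paper's proof: part (a) via the saturated model and positive definiteness of $S_{0,n}$ for $n\ge p$, part (b) via the block-diagonal decomposition of $\Sigma$ and additive splitting of the log-likelihood, and part (c) via the inclusion $\M(\mathcal{H})\subseteq\M(\mathcal{G})$ together with the isolated-vertex reduction from (b). Your treatment of (b) is in fact more careful than the paper's one-line argument, since you explicitly verify that the supremum decouples over blocks and that the almost-sure statements transfer under marginalization to the coordinate subsets $V_j$.
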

\begin{proof}
  (a) It is well known that $\ell(\cdot\,|\,S_{0,n})$ is bounded above
  on the entire cone of positive definite matrices if and only if
  $S_{0,n}$ is positive definite.  Moreover, if $S_n$ is positive,
  then $\Sigma=S_n$ is the unique maximizer \cite[Lemma
  3.2.2]{MR1990662}.  The matrix $S_{0,n}$ is positive definite
  a.s. if and only if $n\ge p$.

  (b) The variables in the different connected components are
  independent.  The likelihood function may be maximized separately
  for the different components.

  (c) If $\mathcal{H}$ and $\mathcal{G}$ have the same vertex set, then
  $\M(\mathcal{H})\subseteq \M(\mathcal{G})$ and, thus,
  $\hat\ell(\mathcal{H}\,|\, S_{0,n})\le \hat\ell(\mathcal{G}\,|\,S_{0,n})$.  The case
  where $\mathcal{H}$ has fewer vertices can be addressed by adding
  isolated nodes and using the fact from (b).
\end{proof}

When $\mathcal{G}$ is connected, Lemma~\ref{lem:saturated} yields only
the trivial bound $\obs{\mathcal{G}} \le p$.  However,
$\obs{\mathcal{G}}$ may be far smaller than $p$ when $\mathcal{G}$ is
sparse, that is, has few edges.  Indeed, in the well understood case
of $\mathcal{G}$ being an acyclic digraph, maximum likelihood
estimation reduces to solving one linear regression problem for each
considered variable \citep[p.~154]{lauritzen:1996}.  The predictors in
the problem for variable $j$ are the variables from the set of {\em
  parents\/} $\pa(j)=\{k\in V:k\to j\in D\}$.  If the sample size
exceeds the size of the largest parent set, then at least one degree
of freedom remains for estimation of the error variance in each one of
the $p$ linear regression problems.  We thus have the following
well-known fact.
%
\begin{theorem}\label{thm:dags}
  Let $\mathcal{G}=(V,D,\emptyset)$ be an acyclic digraph. Then
  \begin{equation*}
    \obs{\mathcal{G}} \;=\; 1+ \max_{j\in V} |\pa(j)|.
  \end{equation*}
\end{theorem}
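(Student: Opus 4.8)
The plan is to reduce to the classical decomposition of the Gaussian likelihood of a directed acyclic graph into one linear regression per vertex, and then read off boundedness from each regression.

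\emph{Step 1 (factorization).} I would first rewrite the likelihood in the parameters $(\Lambda,\Omega)$. Since $B=\emptyset$, the matrix $\Omega$ runs over the positive definite \emph{diagonal} matrices, so $\M(B)$ is parametrized by free scalars $\omega_{11},\dots,\omega_{pp}\in(0,\infty)$, while $\Lambda\in\mathbb{R}^D$ is parametrized by its columns $\lambda_1,\dots,\lambda_p$, the $j$-th column $\lambda_j$ being supported on $\pa(j)$ and otherwise free. Because $\mathcal{G}$ is acyclic, permuting rows and columns of $I-\Lambda$ by a topological order of $V$ makes it triangular with unit diagonal, so $\det(I-\Lambda)=1$ and $\log\det\Sigma=\log\det\Omega=\sum_{j\in V}\log\omega_{jj}$. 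Combining this with $\Sigma^{-1}=(I-\Lambda)\Omega^{-1}(I-\Lambda)^{T}$ gives $\trace(\Sigma^{-1}S_{0,n})=\sum_{j\in V}\omega_{jj}^{-1}\,q_j(\lambda_j)$, where, writing $e_j$ for the $j$-th standard basis vector of $\mathbb{R}^p$ and $\mathbf{x}_k:=(X_k^{(1)},\dots,X_k^{(n)})^{T}\in\mathbb{R}^n$ for the $k$-th column of the sample data matrix, $q_j(\lambda_j):=(e_j-\lambda_j)^{T}S_{0,n}(e_j-\lambda_j)=\tfrac1n\big\|\mathbf{x}_j-\sum_{k\in\pa(j)}\lambda_{kj}\,\mathbf{x}_k\big\|^{2}\ge 0$. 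Hence $\ell(\Sigma\,|\,S_{0,n})=\sum_{j\in V}\big(-\log\omega_{jj}-\omega_{jj}^{-1}q_j(\lambda_j)\big)$ splits into terms living in disjoint parameter blocks, so $\hat\ell(\mathcal{G}\,|\,S_{0,n})=\sum_{j\in V}\hat\ell_j$, with $\hat\ell_j$ the supremum of the $j$-th summand over $\omega_{jj}>0$ and $\lambda_j$ supported on $\pa(j)$.

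\emph{Step 2 (one vertex at a time).} I would then analyze each $\hat\ell_j$. For fixed $\lambda_j$, the function $\omega\mapsto-\log\omega-\omega^{-1}q_j(\lambda_j)$ on $(0,\infty)$ has supremum $-\log q_j(\lambda_j)-1$ if $q_j(\lambda_j)>0$, and supremum $+\infty$, approached as $\omega\to 0^{+}$, if $q_j(\lambda_j)=0$. Since $\inf_{\lambda_j}q_j(\lambda_j)=\tfrac1n\,\mathrm{dist}\big(\mathbf{x}_j,\ \mathrm{span}\{\mathbf{x}_k:k\in\pa(j)\}\big)^{2}$ is attained (orthogonal projection onto a linear subspace), it follows that $\hat\ell_j<\infty$ if and only if $\mathbf{x}_j\notin\mathrm{span}\{\mathbf{x}_k:k\in\pa(j)\}$, while in every case $\hat\ell_j>-\infty$ (e.g.\ $\lambda_j=0$, $\omega_{jj}=1$ yields a finite value).

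\emph{Step 3 (genericity and the two inequalities).} Finally, I would invoke a genericity fact: the sample data matrix $(\mathbf{x}_1,\dots,\mathbf{x}_p)\in\mathbb{R}^{n\times p}$ has an absolutely continuous distribution, so a.s.\ every collection of $r$ of its columns has the maximal possible rank $\min(r,n)$, because rank deficiency of a fixed column subset is the vanishing of a polynomial that is not identically zero and there are only finitely many subsets. If $n\ge 1+\max_j|\pa(j)|$, then $|\pa(j)|<n$ for all $j$, so a.s.\ the $|\pa(j)|+1\le n$ columns indexed by $\pa(j)\cup\{j\}$ are linearly independent, whence $\mathbf{x}_j\notin\mathrm{span}\{\mathbf{x}_k:k\in\pa(j)\}$; by Step 2 every $\hat\ell_j<\infty$, so $\hat\ell(\mathcal{G}\,|\,S_{0,n})<\infty$ a.s., giving $\obs{\mathcal{G}}\le 1+\max_j|\pa(j)|$. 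Conversely, take $j^{\star}$ with $m:=|\pa(j^{\star})|=\max_j|\pa(j)|$; if $n\le m$, then a.s.\ the $m\ge n$ columns $\{\mathbf{x}_k:k\in\pa(j^{\star})\}$ span $\mathbb{R}^n$, so $\mathbf{x}_{j^{\star}}$ lies in their span, $\hat\ell_{j^{\star}}=\infty$, and since $\hat\ell_j>-\infty$ for the remaining vertices, $\hat\ell(\mathcal{G}\,|\,S_{0,n})=\infty$ a.s., giving $\obs{\mathcal{G}}\ge m+1$. The two bounds agree with the stated value.

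The one structural ingredient is the factorization of Step 1, which is classical and uses nothing beyond the unit-triangular form of $I-\Lambda$; the only place to be slightly careful is the measure-zero genericity statement in Step 3, after which the remaining work is just the elementary calculus of $\omega\mapsto-\log\omega-c/\omega$ and a distance-to-a-subspace computation, so I do not anticipate a genuine obstacle.
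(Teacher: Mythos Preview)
Your proof is correct and follows exactly the approach the paper sketches in the paragraph preceding the theorem: factor the log-likelihood into independent linear-regression blocks (using that $\Omega$ is diagonal and $\det(I-\Lambda)=1$ for an acyclic digraph), and then read off boundedness of each block from whether $\mathbf{x}_j$ lies in the span of its parents' columns. The paper treats the result as well known and gives only that informal sketch; your Steps~1--3 are a careful and correct fleshing-out of the same argument, including the genericity statement (which the paper elsewhere cites as Okamoto's lemma).
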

The quantity $|\pa(j)|$ in the theorem is also termed the in-degree of
node $j$.
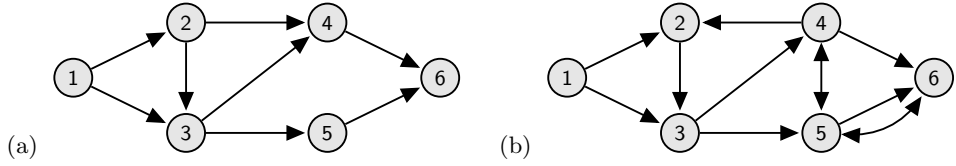
\begin{figure}[t]
  \centering
   \begin{center}
     \begin{tabular}{ll}
     (a)
    \scalebox{.9}{
      \begin{tikzpicture}[->,>=triangle 45,shorten >=1pt,
        auto,thick,
        main node/.style={circle,inner
          sep=3pt,fill=gray!20,draw,font=\sffamily}]  
        
        \node[main node] (1) {1};
        \node[main node] (2) [above right=.4cm and 1.25cm of 1]{2}; 
        \node[main node] (3) [below right=.4cm and 1.25cm of 1] {3}; 
        \node[main node] (4) [right=1.5cm of 2] {4};
        \node[main node] (5) [right=1.5cm of 3] {5};
        \node[main node] (6) [above right=.4cm and 1.25cm of 5] {6};
      
        \path[every node/.style={font=\sffamily\small}] 
        (1) edge node {} (2)
        (1) edge node {} (3)
        (2) edge node {} (3)
        (2) edge node {} (4)
        (3) edge node {} (4)
        (3) edge node {} (5)
        (4) edge node {} (6)
        (5) edge node {} (6);
      \end{tikzpicture}
    }
       & (b)
         \scalebox{.9}{
      \begin{tikzpicture}[->,>=triangle 45,shorten >=1pt,
        auto,thick,
        main node/.style={circle,inner
          sep=3pt,fill=gray!20,draw,font=\sffamily}]  
        
        \node[main node] (1) {1};
        \node[main node] (2) [above right=.4cm and 1.25cm of 1]{2}; 
        \node[main node] (3) [below right=.4cm and 1.25cm of 1] {3}; 
        \node[main node] (4) [right=1.5cm of 2] {4};
        \node[main node] (5) [right=1.5cm of 3] {5};
        \node[main node] (6) [above right=.4cm and 1.25cm of 5] {6};
      
        \path[every node/.style={font=\sffamily\small}] 
        (1) edge node {} (2)
        (1) edge node {} (3)
        (2) edge node {} (3)
        (4) edge node {} (2)
        (3) edge node {} (4)
        (3) edge node {} (5)
        (4) edge node {} (6)
        (5) edge node {} (6);
        \path[<->,every node/.style={font=\sffamily\small}, bend right] 
        (5) edge node {} (6);
        \path[<->,every node/.style={font=\sffamily\small}] 
        (4) edge node {} (5)
        ;        
      \end{tikzpicture}
    }
      \end{tabular}
  \end{center}
  \caption{(a) An acyclic digraph with $\obs{\mathcal{G}}=3$.  (b) A mixed graph
    with $\obs{\mathcal{G}}=4$.}
  \label{fig:intro_example}
\end{figure}

\begin{example}
  If $\mathcal{G}$ is the acyclic digraph from Figure
  \ref{fig:intro_example}(a), then the largest parent sets are of size
  two, for nodes $j\in\{3,4,6\}$. By Theorem \ref{thm:dags},
  $\obs{\mathcal{G}}=3$.
\end{example}

\subsection*{Main result}

In this paper, we determine $\obs{\mathcal{G}}$ for any mixed graph
$\mathcal{G}=(V,D,B)$.  For a set $A\subseteq V$, let $\Pa(A)$ be the
union of $A$ and the parents of its elements, so
\begin{equation}
  \label{eq:PaA}
  \Pa(A) \;=\; A \cup \bigcup_{i\in A} \pa(i).
\end{equation}
Then our main result can be stated as follows.
\begin{theorem}\label{thm:intro}
  Let $\mathcal{G} = (V,D,B)$ be a mixed graph, and let $C_1,\dots,C_l$ be the
  vertex sets of the connected components of its bidirected part
  $\mathcal{G}_\bi=(V,\emptyset,B)$. Then
  \[
    \obs{\mathcal{G}} \;=\; \max_{j=1,\dots,l} \,|\Pa(C_j)|.
  \]
  Moreover, if $n<\obs{\mathcal{G}}$ then $\hat\ell(\mathcal{G}\,|\, S_{0,n})=\infty$ a.s.
\end{theorem}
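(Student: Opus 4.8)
The plan is to prove $\obs{\mathcal{G}}\le\max_j|\Pa(C_j)|$ and $\obs{\mathcal{G}}\ge\max_j|\Pa(C_j)|$ separately; the second inequality will follow from the stronger ``moreover'' assertion, since if $\hat\ell(\mathcal{G}\mid S_{0,n})=\infty$ a.s.\ for $n=\max_j|\Pa(C_j)|-1$ then that value cannot serve as the threshold. Everything is carried out in the parametrization $\Sigma=(I-\Lambda)^{-T}\Omega(I-\Lambda)^{-1}$: writing $R=I-\Lambda$, equation~\eqref{eq:likelihood} becomes $\ell(\Sigma\mid S_{0,n})=2\log\lvert\det R\rvert-\log\det\Omega-\trace(\Omega^{-1}R^{T}S_{0,n}R)$. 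Two structural facts drive the proof. First, since $\mathcal{G}_\bi$ has no edges between distinct components, every $\Omega\in\M(B)$ is block diagonal along the partition $C_1,\dots,C_l$ of $V$, say $\Omega=\bigoplus_j\Omega_{C_j}$ with $\Omega_{C_j}$ positive definite with the zero pattern of $B$ restricted to $C_j$; hence the last two terms of $\ell$ split as $\sum_j(-\log\det\Omega_{C_j}-\trace(\Omega_{C_j}^{-1}M_j))$ with $M_j:=R_j^{T}S_{0,n}R_j$, $R_j$ the matrix of columns of $R$ indexed by $C_j$. Second, column $i$ of $R$ is $e_i$ minus a vector supported on $\pa(i)$, so it is supported on $\Pa(C_j)$ whenever $i\in C_j$; therefore $M_j=\tilde R_j^{T}(S_{0,n})_{\Pa(C_j),\Pa(C_j)}\tilde R_j$ with $\tilde R_j:=R_{\Pa(C_j),C_j}$, and $\tilde R_j$ has full column rank because its columns are $|C_j|$ columns of the invertible matrix $R$.

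For the upper bound, take $n\ge\max_j|\Pa(C_j)|$. Then a.s.\ $(S_{0,n})_{\Pa(C_j),\Pa(C_j)}\succ0$ and hence $M_j\succ0$ for every $j$, so the classical profiling bound $-\log\det\Omega_{C_j}-\trace(\Omega_{C_j}^{-1}M_j)\le-\log\det M_j-|C_j|$ (valid because the relevant set of $\Omega_{C_j}$ lies in the positive definite cone) gives, after summing and using $\sum_j|C_j|=p$,
\[
  \ell(\Sigma\mid S_{0,n})\;\le\;2\log\lvert\det R\rvert-\sum_{j}\log\det M_j-p .
\]
The term $2\log\lvert\det R\rvert$ is exactly the feature introduced by feedback loops (for acyclic $\mathcal{G}$ it is zero), and handling it is the conceptual core of this direction. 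I would bound $\det M_j\ge\delta_j^{|C_j|}\det(R_j^{T}R_j)$ with $\delta_j:=\lambda_{\min}((S_{0,n})_{\Pa(C_j),\Pa(C_j)})>0$, and then apply Fischer's inequality to the column blocks of the invertible matrix $R$ to get $\prod_j\det(R_j^{T}R_j)\ge\lvert\det R\rvert^{2}$. The two contributions in $\log\lvert\det R\rvert$ then cancel, leaving $\ell(\Sigma\mid S_{0,n})\le-p-\sum_j|C_j|\log\delta_j$, a bound free of $(\Lambda,\Omega)$; hence $\hat\ell(\mathcal{G}\mid S_{0,n})<\infty$ a.s.

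For the lower bound, fix $n<\max_j|\Pa(C_j)|$ and choose $j$ attaining the maximum; write $C:=C_j$, $A:=\Pa(C)$, $c:=|C|$, $\bar C:=V\setminus C$. I will build a divergent sequence living inside the single component $C$: set $\Omega$ to the identity on every other component and let $\Lambda$ vanish on all directed edges with head outside $C$, so that $R$ is block triangular with $\det R=\det(I_C-\Lambda_{C,C})$, which we further make equal to $1$ by taking $\Lambda_{C,C}=0$; then, up to the additive constant $\trace(S_{\bar C,\bar C})$, the likelihood collapses to $-\log\det\Omega_C-\trace(\Omega_C^{-1}M)$ with $M:=\tilde R^{T}(S_{0,n})_{A,A}\tilde R$ and $\tilde R:=R_{A,C}$. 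Since $n<|A|$, a.s.\ $\ker(S_{0,n})_{A,A}$ has dimension $|A|-n\ge1$ and, by genericity, contains a vector $w$ with no zero coordinate; put $y:=w|_{C}\in\mathbb{R}^{C}$, which then has all $c$ entries nonzero. The hard part is to meet two requirements at once. \emph{(i) Kill $M$ in the direction $y$:} for each external parent $k\in A\setminus C$ pick a child $i_k\in C$ of $k$ (which exists since $k\in\Pa(C)\setminus C$) and set $\lambda_{k,i_k}=-w_k/w_{i_k}$; a direct computation then gives $\tilde Ry=w$, hence $My=\tilde R^{T}(S_{0,n})_{A,A}w=0$. \emph{(ii) Produce a degenerate $\Omega_C$ with kernel direction $y$:} here connectedness of $B_C$ enters. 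Fix a spanning tree $T$ of $B_C$ and set $\Omega^{*}:=\sum_{\{a,b\}\in T}(y_b e_a-y_a e_b)(y_b e_a-y_a e_b)^{T}$; this ``$y$-weighted tree Laplacian'' is positive semidefinite with the zero pattern of $B_C$, has rank $c-1$ because the rescaled tree incidence vectors are linearly independent, and satisfies $\ker\Omega^{*}=\operatorname{span}\{y\}$ (for $c=1$ take $\Omega^{*}=0$).

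To finish, put $\Omega_{C,t}:=\Omega^{*}+t^{-1}I_C$ (which is positive definite with the zero pattern of $B_C$) and let $\Sigma_t$ be the corresponding covariance matrix. Because $\det R=1$, one computes $-\log\det\Omega_{C,t}=\log t+O(1)\to\infty$, while $\trace(\Omega_{C,t}^{-1}M)$ stays bounded: the only eigenvalue of $\Omega_{C,t}^{-1}$ that blows up has eigenvector $y$, and $My=0$ by step (i). Hence $\ell(\Sigma_t\mid S_{0,n})\to\infty$, so $\hat\ell(\mathcal{G}\mid S_{0,n})=\infty$ a.s. Combining the two inequalities yields $\obs{\mathcal{G}}=\max_j|\Pa(C_j)|$, and the divergence just exhibited, valid for every $n<\obs{\mathcal{G}}$, is precisely the ``moreover'' claim. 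Throughout, the a.s.\ qualifiers (rank of $(S_{0,n})_{A,A}$; existence of a nowhere-vanishing vector in its kernel; positive definiteness of the relevant principal submatrices) hold because their negations confine the sample $X^{(1)},\dots,X^{(n)}$ to a proper algebraic subset of $(\mathbb{R}^{p})^{n}$.
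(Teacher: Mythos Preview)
Your proof is correct.  The upper bound is the same argument as the paper's (profile over the block-diagonal $\Omega$, then use Fischer's/block-Hadamard inequality together with the minimum-eigenvalue bound to kill the $2\log|\det R|$ term); the paper phrases it as first passing to a supergraph in which each bidirected component is complete, but your inequality ``sup over a pattern-constrained $\Omega_{C_j}$ is at most sup over all positive definite $\Omega_{C_j}$'' accomplishes the same thing in one line.

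The lower bound, however, takes a genuinely different route.  The paper reduces to the purely bidirected case: it passes to the subgraph $\mathcal{G}_j$ on $\Pa(C_j)$, then to a further subgraph $\mathcal{H}_j$ obtained by keeping the bidirected part and, for each external parent, exactly one outgoing directed edge; a model-equivalence result (Theorem~5 in \cite{drton:2008}) shows $\M(\mathcal{H}_j)=\M(\mathcal{H}_j^\bi)$, and the paper then proves separately that a connected bidirected graph on $m$ nodes has threshold $m$ via a Woodbury-type rank-one perturbation $K_t=K+tqq^T$ after an iterative construction of $\Sigma$ with $\Sigma q=c\,e_p$.  You instead stay inside the mixed-graph parametrization throughout: you use the directed edges $k\to i_k$ to align $\tilde R y$ with the kernel vector $w$ (so $My=0$), and you manufacture the singular $\Omega^*$ directly as a $y$-weighted tree Laplacian on a spanning tree of $B_C$, perturbed by $t^{-1}I_C$.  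Your construction is self-contained (no external model-equivalence citation) and the tree-Laplacian device is arguably cleaner than the paper's row-by-row construction of $\Sigma$; on the other hand, the paper's modular treatment isolates the bidirected case as a stand-alone theorem of independent interest.
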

In the special case that $\mathcal{G}$ is an acyclic digraph, we have
$B=\emptyset$ and Theorem~\ref{thm:intro} reduces to
Theorem~\ref{thm:dags} because each connected component of $\mathcal{G}_\bi$
has only a single node $j\in V$.  Then $\Pa(\{j\})=\pa(j)\cup\{j\}$
and $|\Pa(\{j\})|=1+|\pa(j)|$.

\begin{example}
  Let $\mathcal{G}$ be the graph in Figure \ref{fig:intro_example}(b).
  The parameters of the model given by $\mathcal{G}$ are identifiable
  in a generic or almost everywhere sense, as can be checked readily
  using the half-trek criterion
  \citep{foygel:draisma:drton:2012,semid}.  Hence, $\M(\mathcal{G})$
  is a 16-dimensional subset of the 21-dimensional cone of positive
  definite $6\times 6$ matrices.
  By Theorem \ref{thm:intro}, $\obs{\mathcal{G}} = 4$.  Indeed, $\mathcal{G}_\bi$ has four
  connected components with vertex sets $C_1=\{1\}$, $C_2=\{2\}$,
  $C_3=\{3\}$ and $C_4=\{4,5,6\}$.  Adding parents yields
  $\Pa(C_1)=\{1\}$, $\Pa(C_2)=\{1,2,4\}$, $\Pa(C_3)=\{1,2,3\}$, and
  $\Pa(C_4)=\{3,4,5,6\}$.
\end{example}

\begin{remark}
  \label{rem:not-closed}
  If the likelihood function associated with an acyclic digraph
  $\mathcal{G}=(V,D,\emptyset)$ is bounded then it achieves its maximum.  Hence,
  $n\ge \obs{\mathcal{G}}$ ensures that the maximum is a.s.~achieved.  We are
  not aware of any results in the literature that, for a more general
  class of graphs, would similarly guarantee achievement of the
  maximum.  In fact, we believe that there are mixed graphs $\mathcal{G}$ such
  that even for sample size $n\ge\obs{\mathcal{G}}$ the probability of the
  likelihood function failing to achieve its maximum is not zero.
  This belief is based on the fact that the set $\M(\mathcal{G})$ is not
  generally closed.  As a simple example, consider the graph $\mathcal{G}$ with
  edges $1\to 2$, $2\to 3$, and $2\bi 3$, for which $\M(\mathcal{G})$ comprises
  all positive definite $3\times 3$ matrices $\Sigma=(\sigma_{ij})$
  with $\sigma_{13}=0$ whenever $\sigma_{12}=0$.
\end{remark}

\subsection*{Outline}

In the remainder of the paper, we first prove that $\obs{\mathcal{G}}$
is no larger than the value asserted in Theorem~\ref{thm:intro}
(Section~\ref{sec:upper-bound}).  Next, we derive $\obs{\mathcal{G}}$
for any bidirected graph $\mathcal{G}$ (Section~\ref{sec:bidirected}).
In Section~\ref{sec:lower-bound} we use submodels given by bidirected
graphs to show that the value from Theorem~\ref{thm:intro} is also a
lower bound on $\obs{\mathcal{G}}$ for any (possibly cyclic) mixed
graph, which then completes the proof of Theorem~\ref{thm:intro}.  A
numerical experiment in Section~\ref{sec:numerical-experiment}
exemplifies that even a high-dimensional model is amenable to standard
likelihood inference as long as its maximum likelihood threshold is
small.  The experiment suggests that likelihood inference allows one
to perform model selection for high-dimensional but sparse cyclic
models.  In Section~\ref{sec:undirected}, we highlight interesting
differences between the maximum likelihood threshold of Gaussian
graphical models given by a directed versus an undirected cycle.  The
former model is nested in the latter and the two models have the same
dimension, yet the thresholds are different.

\section{Upper bound on the sample size
  threshold}\label{sec:upper-bound}

We prove the upper bound that is part of Theorem~\ref{thm:intro}.

\begin{theorem}
  \label{thm:bounding-above-cyclic}
  Let $\mathcal{G} = (V,D,B)$ be a mixed graph, and let $C_1,\dots,C_l$ be the
  vertex sets of the connected components of its bidirected part
  $\mathcal{G}_\bi=(V,\emptyset,B)$. Then
  \[
    \obs{\mathcal{G}} \;\le\; \max_{j=1,\dots,l} \,|\Pa(C_j)|.
  \]
\end{theorem}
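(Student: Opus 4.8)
The plan is to reduce, via Lemma~\ref{lem:saturated}(b), to the case where the bidirected part $\mathcal{G}_\bi$ is connected, so that there is a single set $C = V_{\text{b}}$ of vertices carrying the (connected) bidirected structure, and one must show that $\obs{\mathcal{G}} \le |\Pa(C)|$. Equivalently, one must show that for $n \ge |\Pa(C)|$, the profile log-likelihood $\ell(\Sigma\mid S_{0,n})$ is a.s.\ bounded above on $\M(\mathcal{G})$. The natural strategy is to bound $-\log\det\Sigma - \trace(\Sigma^{-1} S_{0,n})$ from above by splitting the two terms cleverly along the parametrization $\Sigma = (I-\Lambda)^{-T}\Omega(I-\Lambda)^{-1}$: note that $\log\det\Sigma = \log\det\Omega$ since $\det(I-\Lambda) \ne 0$ is required, while $\trace(\Sigma^{-1}S_{0,n}) = \trace\bigl(\Omega^{-1}(I-\Lambda)S_{0,n}(I-\Lambda)^T\bigr)$. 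So the likelihood equals $-\log\det\Omega - \trace\bigl(\Omega^{-1} T_\Lambda\bigr)$ where $T_\Lambda := (I-\Lambda)S_{0,n}(I-\Lambda)^T$ is a rank-controlled Gram-type matrix depending on the regression residuals; the point is that maximizing over $\Omega \in \M(B)$ for fixed $\Lambda$ is a Gaussian-graphical-model (concentration-graph) problem, which is bounded above exactly when the relevant principal submatrix of $T_\Lambda$ is positive definite, and one then has to control the supremum of the resulting profiled-out expression as $\Lambda$ ranges over $\RDreg$.

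The key technical steps, in order, are: (i) Reduce to $\mathcal{G}_\bi$ connected via Lemma~\ref{lem:saturated}(b). (ii) Observe that only the submatrix of $\Omega$ indexed by $C$ is free; the entries of $\Omega$ outside $C$ are diagonal, so the likelihood decouples into a part coming from the ``interior'' block $C$ and a part coming from the error variances of the remaining nodes, and the latter part is harmless. (iii) For the block indexed by $C$, write the contribution in terms of residual vectors $R_i^{(s)} = X_i^{(s)} - \sum_{j\to i} \lambda_{ij} X_j^{(s)}$ for $i \in C$; these residuals lie in the span of the observations of $X_k^{(s)}$, $k \in \Pa(C)$. (iv) Use the fact that a Gaussian graphical model (undirected/bidirected) log-likelihood is bounded above provided the empirical covariance matrix of these residuals, viewed as a $|C|\times|C|$ matrix, is positive definite — which holds as soon as the $n \times |\Pa(C)|$ data matrix for $\Pa(C)$ has full column rank $|\Pa(C)|$, an a.s.\ event once $n \ge |\Pa(C)|$. (v) Crucially, show the bound is uniform in $\Lambda$: even though the residuals depend on $\Lambda$, for $n \ge |\Pa(C)|$ the sample covariance of $(X_k^{(s)})_{k\in\Pa(C)}$ is a.s.\ positive definite, and the residuals are obtained by applying a fixed linear map (depending on $\Lambda$) to full-rank data, so the empirical residual covariance is positive definite for every $\Lambda$, with a positive-definiteness that one must show is bounded below uniformly — or, more cleanly, bound the per-$\Lambda$ maximized likelihood by a quantity that is manifestly finite, e.g.\ by relating it to the unconstrained Gaussian MLE on the residuals and using concavity/monotonicity arguments. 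For a bidirected (concentration) model, one can invoke the known characterization that boundedness of $\ell$ over $\M(B)$ holds iff the empirical covariance restricted to $C$ is positive definite together with the structure of $B$, and positive definiteness of that matrix follows from full column rank of the $\Pa(C)$ design.

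The main obstacle I anticipate is step (v): the uniformity over $\Lambda \in \RDreg$. For a fixed $\Lambda$ the argument is a routine combination of ``full-rank data $\Rightarrow$ positive-definite residual covariance'' with ``positive-definite empirical covariance $\Rightarrow$ bounded Gaussian(-graphical) likelihood.'' But $\RDreg$ is non-compact (it even fails to be all of $\mathbb{R}^D$ unless $\mathcal{G}$ is acyclic), and as $\Lambda$ approaches the locus where $I-\Lambda$ is singular, or escapes to infinity, the residuals can degenerate; one must argue that the supremum over $\Lambda$ is still finite. I expect the right move is to pass to the $\Sigma$-parametrization directly: rather than bounding $\sup_\Lambda$ of a per-$\Lambda$ maximum, observe that every $\Sigma \in \M(\mathcal{G})$ has $\Sigma_{\Pa(C),\Pa(C)}$ (or an appropriate induced submatrix/Schur complement capturing the $C$-block of the concentration matrix) positive definite, and that $\ell(\Sigma\mid S_{0,n})$ is bounded above on the set of $\Sigma$ whose relevant principal submatrix is positive definite — exactly as in the acyclic/regression case but with the parent set replaced by $\Pa(C)$ — so the uniformity is built into working with $\Sigma$ rather than $(\Lambda,\Omega)$. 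Carrying this out carefully, identifying the precise submatrix of $\Sigma$ (or of $\Sigma^{-1}$) whose positive definiteness both (a) follows a.s.\ from $n \ge |\Pa(C)|$ and (b) forces $\ell$ to be bounded above on $\M(\mathcal{G})$, is the crux of the proof.
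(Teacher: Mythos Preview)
Your proposal contains two genuine errors that prevent it from going through.

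\textbf{Step (i) is invalid.} Lemma~\ref{lem:saturated}(b) concerns the connected components of $\mathcal{G}$, not of $\mathcal{G}_\bi$. These are entirely different: the graph in Figure~\ref{fig:intro_example}(b), for instance, is connected but its bidirected part has four components. The likelihood does \emph{not} factor over the $C_j$'s, because directed edges link the blocks; you cannot reduce to a single bidirected component this way. The paper treats all $C_j$ simultaneously. What it does instead is enlarge $\mathcal{G}$ to a supergraph $\mathcal{G}'$ in which each $C_j$ is bidirected-complete (Lemma~\ref{lem:saturated}(c) lets you pass to supergraphs); this makes $\Omega$ genuinely block-diagonal with \emph{unconstrained} positive definite blocks $\Omega_{C_j,C_j}$, so your ``Gaussian-graphical-model problem'' becomes a saturated one on each block and has the explicit maximizer $\hat\Omega_{C_j,C_j}=[(I-\Lambda)^T S_{0,n}(I-\Lambda)]_{C_j,C_j}$.

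\textbf{The determinant term is wrong.} You write $\log\det\Sigma=\log\det\Omega$ ``since $\det(I-\Lambda)\ne 0$''. In fact $\log\det\Sigma=\log\det\Omega-\log\bigl(\det(I-\Lambda)^2\bigr)$, and for cyclic $\mathcal{G}$ the term $\log\bigl(\det(I-\Lambda)^2\bigr)$ is genuinely nonzero and unbounded on $\RDreg$. This missing term is exactly the source of the uniformity-in-$\Lambda$ difficulty you flag in step~(v). The paper handles it not by passing back to the $\Sigma$-parametrization, but by the block Hadamard inequality
\[
\log\bigl(\det(I-\Lambda)^2\bigr)\;\le\;\sum_{j=1}^l \log\det\bigl[(I-\Lambda)^T(I-\Lambda)\bigr]_{C_j,C_j},
\]
which, combined with the profiled log-likelihood, bounds $\ell$ above (up to a constant) by a sum of logarithms of ratios $\det\bigl[(I-\Lambda)^T(I-\Lambda)\bigr]_{C_j,C_j}\big/\det\bigl[(I-\Lambda)^T S_{0,n}(I-\Lambda)\bigr]_{C_j,C_j}$. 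Each such ratio is then bounded by $\lambda_j(S_{0,n})^{-|C_j|}$, where $\lambda_j(S_{0,n})$ is the minimum eigenvalue of $(S_{0,n})_{\Pa(C_j),\Pa(C_j)}$, a.s.\ positive once $n\ge|\Pa(C_j)|$. This eigenvalue/Hadamard argument is the crux you are missing; your suggestion to ``find a submatrix of $\Sigma$ whose positive definiteness forces boundedness'' does not by itself supply the needed uniform bound over $\RDreg$.
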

\begin{proof}
  Let $\mathcal{G}'$ be the supergraph of $\mathcal{G}$ obtained by adding bidirected
  edges between any two nodes that are in the same connected component
  of $\mathcal{G}_{\bi}=(V,\emptyset,B)$ but that are not adjacent in $\mathcal{G}_{\bi}$.  Then
  $C_1,\dots,C_l$ are still the vertex sets of the connected
  components of the bidirected part of $\mathcal{G}'$, and the sets $\Pa(C_j)$
  are identical in $\mathcal{G}$ and $\mathcal{G}'$; see Figure~\ref{fig:intro_example:G'}
  for an example.  We emphasize that the bidirected part of $\mathcal{G}'$ is a
  disjoint union of complete subgraphs.  The remainder of this proof
  shows the claimed bound for $\mathcal{G}'$.  By Lemma~\ref{lem:saturated}(c),
  the bound then also holds for $\mathcal{G}$.  To simplify notation, we assume
  that $\mathcal{G}$ itself has a bidirected part $\mathcal{G}_{\bi}$ that is a disjoint
  union of complete graphs.

\begin{figure}[t]
  \centering
   \begin{center}
    \scalebox{.9}{
      \begin{tikzpicture}[->,>=triangle 45,shorten >=1pt,
        auto,thick,
        main node/.style={circle,inner
          sep=3pt,fill=gray!20,draw,font=\sffamily}]  
        
        \node[main node] (1) {1};
        \node[main node] (2) [above right=.4cm and 1.25cm of 1]{2}; 
        \node[main node] (3) [below right=.4cm and 1.25cm of 1] {3}; 
        \node[main node] (4) [right=1.5cm of 2] {4};
        \node[main node] (5) [right=1.5cm of 3] {5};
        \node[main node] (6) [above right=.4cm and 1.25cm of 5] {6};
      
        \path[every node/.style={font=\sffamily\small}] 
        (1) edge node {} (2)
        (1) edge node {} (3)
        (2) edge node {} (3)
        (4) edge node {} (2)
        (3) edge node {} (4)
        (3) edge node {} (5)
        (4) edge node {} (6)
        (5) edge node {} (6);
        \path[<->,every node/.style={font=\sffamily\small}, bend right] 
        (5) edge node {} (6)
        (6) edge node {} (4);
        \path[<->,every node/.style={font=\sffamily\small}] 
        (4) edge node {} (5)
        ;        
      \end{tikzpicture}
    }
  \end{center}
  \caption{The graph $\mathcal{G}'$ when $\mathcal{G}$ is the mixed graph from
    Figure~\ref{fig:intro_example}(b).  Edge $4\bi 6$ has been added.} 
  \label{fig:intro_example:G'}
\end{figure}
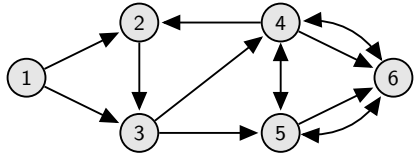

  For $\Sigma=(I-\Lambda)^{-T}\Omega(I-\Lambda)^{-1}$, we have
  \begin{align*}
    \ell(\Sigma\,|\,S_{0,n}) \;=\;
    \log \left( \det(I-\Lambda)^2\right) - \log\det(\Omega) -
    \trace\left((I-\Lambda)\Omega^{-1}(I-\Lambda)^T S_{0,n}\right).
  \end{align*}
  The set $\M(B)$ comprises all block-diagonal $p\times p$ matrices
  with $l$ blocks determined by the connected components
  of $\mathcal{G}_{\bi}$.  Therefore, if $\Lambda=(\lambda_{jk})\in\RDreg$ and
  $\Omega\in\M(B)$, we have
  \begin{multline}
    \label{eq:ell-digraph}
    \ell(\Sigma\,|\,S_{0,n}) 
    \;=\;
      \log \left( \det(I-\Lambda)^2\right) \\
      - \sum_{j=1}^l \left[ \log\det\left(
      \Omega_{C_j,C_j}\right) + \trace\left\{\Omega_{C_j,C_j}^{-1}
      \left((I-\Lambda)^T 
      S_{0,n} (I-\Lambda)\right)_{C_j,C_j}\right\}\right]. 
  \end{multline}
  Let $X_1,\dots,X_p$ be the columns of the data matrix
  \[
    \mathbf{X} \;=\;
    \begin{pmatrix}
      X^{(1)} & \dots & X^{(n)}
    \end{pmatrix}^T
    \in\mathbb{R}^{n\times p}.
  \]
  Then $S_{0,n}=\frac{1}{n} \mathbf{X}^T\mathbf{X}$, and
  \begin{align*}
    \label{eq:sos}
    \hat\Omega_{C_j,C_j} 
    &
      \;=\; \left[(I-\Lambda)^T
      S_{0,n} (I-\Lambda)\right]_{C_j,C_j}\\
    &\;=\; 
      \frac{1}{n} \left[ \mathbf{X}(I-\Lambda)_{V,C_j} \right]^T\left[
        \mathbf{X}(I-\Lambda)_{V,C_j} \right]
  \end{align*}
  is the sample covariance matrix of the vector of error terms 
  \[
    X_u-\sum_{k\in\pa(u)} \lambda_{ku}X_k, \quad u\in C_j.
  \]

  Fix $\Lambda\in\RDreg$.  Then, for any $j=1,\dots,l$, the function
  \begin{equation}
    \label{eq:ell-j}
    \Omega_{C_j,C_j} \mapsto -\log\det\left(
      \Omega_{C_j,C_j}\right) - \trace\left\{\Omega_{C_j,C_j}^{-1}
      \left[(I-\Lambda)^T 
      S_{0,n} (I-\Lambda)\right]_{C_j,C_j}\right\}
  \end{equation}
  is bounded if and only if $\hat\Omega_{C_j,C_j}$ is positive
  definite.  If it is bounded then $\hat\Omega_{C_j,C_j}$ is the
  unique maximizer \cite[Lemma 3.2.2]{MR1990662}.  We claim that if
  $n\ge |\Pa(C_j)|$, then $\hat\Omega_{C_j,C_j}$ is a.s.~positive
  definite.  Indeed, by the lemma in \cite{okamoto:1973}, all square
  submatrices of $\mathbf{X}$ are a.s.~invertible.  If
  $n\ge |\Pa(C_j)|$, this implies that the vectors $X_k$,
  $k\in \Pa(C_j)$, are a.s.~linearly independent.  The columns of
  $\mathbf{X}(I-\Lambda)_{V,C_j}$ are linear combinations of these vectors.
  Because $I-\Lambda$ is invertible, the submatrix
  $(I-\Lambda)_{V,C_j}$ has full column rank $|C_j|$.  Therefore,
  $\mathbf{X}(I-\Lambda)_{V,C_j}$ a.s.~has full column rank $|C_j|$, which
  implies positive definiteness of $\hat\Omega_{C_j,C_j}$.

  Because a union of null sets is a null set, if
  $n\ge \max_{j=1,\dots,l} |\Pa(C_j)|$, then a.s.~all matrices
  $\hat\Omega_{C_j,C_j}$ for $j=1,\dots, l$ are simultaneously
  positive definite.  We may thus proceed by substituting all
  $\hat\Omega_{C_j,C_j}$ into the log-likelihood function
  $\ell(\Sigma\,|\,S_{0,n})$ displayed in~(\ref{eq:ell-digraph}).  The
  resulting 
  profile log-likelihood function is
  \begin{equation}
    \label{eq:profile}
    \ell(\Lambda\,|\,S_{0,n}) \;=\; 
    \log \left( \det(I-\Lambda)^2\right) -\, p\, - \sum_{j=1}^l
    \log\det\left(\left[(I-\Lambda)^T 
      S_{0,n} (I-\Lambda)\right]_{C_j,C_j}\right). 
\end{equation}
  In order to show that $\ell(\Lambda\,|\,S_{0,n})$ is a.s.~bounded from
  above, we apply a block-version of the Hadamard inequality, which
  yields that
  \begin{equation}
    \label{eq:hadamard}
    \log \left( \det(I-\Lambda)^2\right) \;\le\; \sum_{j=1}^l
    \log\det\left( \left[(I-\Lambda)^T 
      (I-\Lambda)\right]_{C_j,C_j}\right);
  \end{equation}
  recall that the sets $C_j$ form a partition of $V=\{1,\dots,p\}$.
  Using~(\ref{eq:hadamard}) in~(\ref{eq:profile}), we see that up to a
  constant the exponential of $\ell(\Lambda\,|\,S_{0,n})$ is bounded
  above by the product of the terms
  \begin{multline}
    \label{eq:profile-j}
    \frac{\det\left(\left[(I-\Lambda)^T 
          (I-\Lambda)\right]_{C_j,C_j}\right)}{\det\left(\left[(I-\Lambda)^T 
          S_{0,n} (I-\Lambda)\right]_{C_j,C_j}\right)}\\
    \;=\;
    \frac{\det\left((I-\Lambda^T)_{C_j,\Pa(C_j)} 
        (I-\Lambda)_{\Pa(C_j),C_j}\right)}{\det\left((I-\Lambda^T)_{C_j,\Pa(C_j)} 
        (S_{0,n})_{\Pa(C_j),\Pa(C_j)} (I-\Lambda)_{\Pa(C_j),C_j}\right)}
  \end{multline}
  for $j=1,\dots,l$.
  Let $\lambda_j(S_{0,n})$ be the minimum eigenvalue of the
  $\Pa(C_j)\times \Pa(C_j)$ submatrix of $S_{0,n}$.  This submatrix is
  the sample covariance matrix of the variables indexed by $\Pa(C_j)$.
  Therefore, if $n\ge |\Pa(C_j)|$, then $\lambda_j(S_{0,n})$ is
  a.s.~positive.  Now,
  $(S_{0,n})_{\Pa(C_j),\Pa(C_j)}\succeq \lambda_j(S_{0,n})I$ in the
  positive semidefinite ordering.  Using Observation 7.2.2 and
  Corollary 7.7.4(b) in \cite{MR1084815}, we obtain that the ratio
  in~(\ref{eq:profile-j}) is a.s.~bounded above by
  $\lambda_j(S_{0,n})^{-|C_j|}<\infty$.
\end{proof}

\section{Bidirected graphs}\label{sec:bidirected}

Consider a bidirected graph $\mathcal{G}=(V,\emptyset,B)$.  Then $\M(\mathcal{G}) = \M(B)$
is a set of sparse positive definite matrices.  We
prove that the bound from Lemma~\ref{lem:saturated}(a) is an equality
when the bidirected graph $\mathcal{G}$ is connected.
\begin{theorem}\label{thm:bidirected}
  If $\mathcal{G}=(V,\emptyset,B)$ is connected, then $\obs{\mathcal{G}} = p$.  Moreover,
  if $n<\obs{\mathcal{G}}$ then $\hat\ell(\mathcal{G}\,|\, S_{0,n})=\infty$ a.s.
\end{theorem}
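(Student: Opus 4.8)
The plan is to prove both directions. The inequality $\obs{\mathcal{G}} \le p$ is immediate from Lemma~\ref{lem:saturated}(a), so the substance is showing that for every sample size $n \le p-1$ the supremum $\hat\ell(\mathcal{G}\,|\,S_{0,n})$ is almost surely infinite; this simultaneously gives $\obs{\mathcal{G}} \ge p$ and the ``moreover'' claim. By Lemma~\ref{lem:saturated}(c) applied to a spanning tree (or any connected spanning subgraph) of $\mathcal{G}$, it suffices to treat the case where $\mathcal{G}_\bi$ is a tree, since $\M(\mathcal{G}') \subseteq \M(\mathcal{G})$ for a subgraph $\mathcal{G}'$ with the same vertex set and hence $\hat\ell(\mathcal{G}'\,|\,S_{0,n}) \le \hat\ell(\mathcal{G}\,|\,S_{0,n})$. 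So I reduce to: if $\mathcal{G}$ is a bidirected tree on $p$ vertices and $n \le p-1$, then $\hat\ell(\mathcal{G}\,|\,S_{0,n}) = \infty$ a.s.

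The key idea is to exhibit an explicit curve $\Omega(t) \in \M(\mathcal{G})$, $t \to \infty$, along which $\ell(\Omega(t)\,|\,S_{0,n}) \to \infty$. Writing $\ell(\Omega\,|\,S_{0,n}) = -\log\det\Omega - \trace(\Omega^{-1}S_{0,n})$, the standard way to drive this to $+\infty$ is to let $\Omega^{-1}$ degenerate toward a rank-deficient positive semidefinite matrix $Q$ whose kernel contains the column space of $\mathbf{X}$ (equivalently, $\trace(Q\,S_{0,n}) = 0$, using $S_{0,n} = \frac1n \mathbf{X}^T\mathbf{X}$), while keeping $\det(\Omega^{-1})$ bounded away from $0$ — or more simply, letting $\Omega^{-1}$ itself stay in the cone of $B$-sparse concentration matrices but approach the boundary. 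Concretely, since $n \le p-1$, the row space of $\mathbf{X}$ is an at-most-$(p-1)$-dimensional subspace of $\mathbb{R}^p$, so almost surely (by Okamoto's lemma, all square submatrices of $\mathbf{X}$ are invertible) there is some vector $v$ with $\mathbf{X}v \ne 0$ but we can still find directions to exploit. The cleanest route for a bidirected tree: a concentration matrix $K = \Omega^{-1}$ is feasible precisely when $K_{ij} = 0$ for non-adjacent $i \ne j$; for a tree, the off-diagonal support is just the $p-1$ tree edges. I will construct $K(t)$ supported on the tree with a one-parameter family whose determinant is a fixed positive constant but for which $\trace(K(t) S_{0,n}) \to 0$. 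Taking $\Omega(t) = K(t)^{-1}$ then gives $\ell = \log\det K(t) - \trace(K(t)S_{0,n}) \to \log(\text{const}) - 0$... which is only bounded, not unbounded — so instead I scale: use $c \cdot K(t)$ with $c \to 0$ slowly, giving $-\log\det(\Omega) = \log\det(cK(t)) = p\log c + \log\det K(t) \to -\infty$, the wrong sign; use $c \to \infty$ on the degenerating-kernel family instead so that $\log\det(cK(t)) = p \log c + \log\det K(t)$ with $\det K(t)$ staying positive, while $\trace(cK(t)S_{0,n}) = c\,\trace(K(t)S_{0,n})$ stays bounded because $\trace(K(t)S_{0,n}) = O(1/c)$. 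Balancing the two rates makes $\ell \to +\infty$.

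The main obstacle — and the place I expect the real work to be — is constructing, for an arbitrary bidirected tree, the explicit family $K(t)$ of tree-supported positive definite matrices whose determinant stays bounded below while a quadratic form $\trace(K(t)S_{0,n})$, with $S_{0,n}$ a generic rank-$n$ PSD matrix with $n \le p-1$, is driven to zero. A natural construction: order the vertices so the tree is rooted, let $K(t)$ have $1$'s on the diagonal and a single large entry $t$ on one chosen edge (or a product structure across a root-to-leaf path), and observe $\det K(t)$ is a polynomial in $t$ of degree determined by matchings of the tree — for a tree this determinant is $\sum_{M}(-1)^{|M|}\prod_{\{i,j\}\in M} K_{ij}^2$ over matchings $M$, so by choosing signs and magnitudes along a path one keeps it at a fixed positive value while the generic (a.s., by Okamoto) non-vanishing of the relevant minors of $\mathbf X$ guarantees the quadratic form behaves as $\Theta(1/t)$ rather than degenerating faster. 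I would formalize this by induction on the tree: peel off a leaf $\ell$ with neighbor $m$, use the Schur complement of the $\ell$-block to reduce to a tree on $p-1$ vertices with a perturbed $S_{0,n}$ that still has rank $\le n \le p-2$ generically, and invoke the inductive hypothesis; the base case $p = 1$, $n = 0$ is trivial since $S_{0,0} = 0$ and $\ell(\sigma^2\,|\,0) = -\log\sigma^2 \to \infty$ as $\sigma^2 \to 0$. The induction bookkeeping — verifying the rank drop and the a.s.\ genericity survive the Schur-complement step — is the technical heart, and I would lean on Okamoto's lemma throughout to discharge the ``almost surely'' qualifiers.
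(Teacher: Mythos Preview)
Your proposal contains a fundamental error: you have confused the bidirected model with the undirected one. For a bidirected graph $\mathcal{G}=(V,\emptyset,B)$, membership $\Omega\in\M(\mathcal{G})=\M(B)$ means the \emph{covariance} matrix $\Omega$ has zeros at non-edges, not the concentration matrix. Your statement ``a concentration matrix $K=\Omega^{-1}$ is feasible precisely when $K_{ij}=0$ for non-adjacent $i\ne j$'' is simply false here; that would be the undirected graphical model treated in Section~\ref{sec:undirected}. Consequently your entire construction of tree-supported $K(t)$ is aimed at the wrong constraint set, and nothing in the argument ensures that $\Omega(t)=K(t)^{-1}$ lies in $\M(\mathcal{G})$.

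This is not a minor slip, because the correct constraint is exactly what makes the problem delicate. The natural move you gesture at---perturb $K$ by $t\,qq^T$ with $q\in\ker S_{0,n}$ so that $\trace(KS_{0,n})$ is unchanged while $\det K\to\infty$---works beautifully on the concentration side, but by Woodbury the induced perturbation on $\Sigma=K^{-1}$ is a multiple of $(\Sigma q)(\Sigma q)^T$, which is generically a full rank-one matrix and destroys every zero in $\Sigma$. The paper's proof confronts this head-on: Lemma~\ref{lem:zero_entries} produces $q\in\ker S_{0,n}$ with all entries nonzero, and then Lemma~\ref{lem:constr_barsigma} exploits connectedness of $\mathcal{G}$ to construct $\Sigma\in\M(\mathcal{G})$ with $\Sigma q=c\,e_p$. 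With that choice, $(\Sigma q)(\Sigma q)^T=c^2 e_pe_p^T$ is diagonal, so $\Sigma_t=\Sigma-\frac{1}{1/t+q^T\Sigma q}\,\Sigma qq^T\Sigma$ stays in $\M(\mathcal{G})$ for all $t$, while $K_t=\Sigma^{-1}+tqq^T$ gives $\ell(\Sigma_t\,|\,S_{0,n})\to\infty$. The construction in Lemma~\ref{lem:constr_barsigma}---building $\Sigma$ row by row, using connectedness to always reserve one later neighbor whose entry can be set to force $(\Sigma q)_i=0$---is the genuine idea you are missing, and your Schur-complement induction does not supply a substitute for it.
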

The proof of the theorem makes use of two lemmas.  We derive those first.

\begin{lemma}\label{lem:zero_entries}
  If $n<p$, then the kernel of $S_{0,n}$ a.s.~contains a vector
  $q\in\mathbb{R}^p$ with all coordinates nonzero.
\end{lemma}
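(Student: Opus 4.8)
The plan is to reinterpret $\ker(S_{0,n})$ column-wise and then show that this random subspace meets the complement of every coordinate hyperplane. Since $S_{0,n}=\frac1n\mathbf{X}^T\mathbf{X}$, we have $\ker(S_{0,n})=\ker(\mathbf{X})$ (if $\mathbf{X}^T\mathbf{X}q=0$ then $\lVert\mathbf{X}q\rVert^2=q^T\mathbf{X}^T\mathbf{X}q=0$). Writing $X_1,\dots,X_p\in\mathbb{R}^n$ for the columns of $\mathbf{X}$, a vector $q\in\mathbb{R}^p$ lies in $\ker(\mathbf{X})$ precisely when $\sum_i q_iX_i=0$. For $i=1,\dots,p$ set $H_i=\{q\in\mathbb{R}^p:q_i=0\}$; then $\ker(\mathbf{X})\not\subseteq H_i$ if and only if some dependence among the columns involves $X_i$ nontrivially, that is, $X_i\in\operatorname{span}\{X_k:k\ne i\}$.

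First I would use the hypothesis $n<p$, which gives $p-1\ge n$, together with the lemma of \cite{okamoto:1973} invoked above: every square submatrix of $\mathbf{X}$ is a.s.\ invertible. Fixing $i$, the $p-1$ vectors $\{X_k:k\ne i\}$ therefore contain $n$ that form an invertible $n\times n$ submatrix of $\mathbf{X}$, so $\operatorname{span}\{X_k:k\ne i\}=\mathbb{R}^n$ a.s., and in particular $X_i$ belongs to that span. Hence each event $\mathcal{E}_i=\{\ker(\mathbf{X})\not\subseteq H_i\}$ has probability one, and so does the finite intersection $\mathcal{E}=\bigcap_{i=1}^p\mathcal{E}_i$.

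On $\mathcal{E}$ I would conclude with the elementary fact that a real vector space is never a finite union of proper subspaces. The space $K:=\ker(\mathbf{X})$ has dimension $p-n\ge1$, and on $\mathcal{E}$ each intersection $K\cap H_i$ is a proper subspace of $K$. Thus $\bigcup_{i=1}^p(K\cap H_i)\subsetneq K$, so there exists $q\in K$ with $q\notin H_i$ for every $i$; this $q\in\ker(S_{0,n})$ has all coordinates nonzero, as required.

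The argument is short, and the only delicate point is the bookkeeping in the second step: one must check that $n<p$ leaves enough columns outside index $i$ to span $\mathbb{R}^n$, which is exactly the equivalence $p-1\ge n\Leftrightarrow n<p$. I do not anticipate a serious obstacle beyond getting this counting right and recording that a finite intersection of almost-sure events is almost sure.
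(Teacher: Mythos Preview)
Your proof is correct, and it takes a genuinely different route from the paper's argument. Both proofs start by identifying $\ker(S_{0,n})=\ker(\mathbf{X})$ and both invoke Okamoto's lemma on square submatrices of $\mathbf{X}$, but the conclusions are reached differently. The paper fixes the last $p-n$ coordinates of $q$ to equal $1$, solves $u=-\mathbf{X}_1^{-1}\mathbf{X}_2 e$ for the first $n$ coordinates, and then for each $j\le n$ argues that $u_j\det(\mathbf{X}_1)$ is a nonzero polynomial in the entries of $\mathbf{X}$, exhibiting an explicit matrix at which it does not vanish. You instead show, for each $i$, that $\ker(\mathbf{X})\not\subseteq H_i$ a.s.\ (because the remaining $p-1\ge n$ columns already span $\mathbb{R}^n$), and then finish with the classical fact that a vector space over an infinite field is not a finite union of proper subspaces. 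Your argument is shorter and more conceptual, avoiding the explicit polynomial verification; the paper's argument is more constructive, producing a specific $q$, though that explicit form is never used downstream.
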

\begin{proof}
  The matrix $S_{0,n}$ has the same kernel as 
  \[
    \mathbf{X} \;=\;
    \begin{pmatrix}
      X^{(1)} & \dots & X^{(n)}
    \end{pmatrix}^T
    \in\mathbb{R}^{n\times p}.
  \]
  Partition the matrix as $\mathbf{X}=(\mathbf{X}_1,\mathbf{X}_2)$, where the square submatrix $\mathbf{X}_1$
  contains the first $n$ columns.  
  The determinant being a polynomial, the lemma in \cite{okamoto:1973}
  yields that $\mathbf{X}_1$ is a.s.~invertible.

  We claim that for all $j\le n$, the kernel of $\mathbf{X}$ almost
  surely contains a vector $q$ with $q_{n+1}=\dots=q_p=1$ and
  $q_j\not=0$.  Without loss of generality, it suffices to treat the
  case of $j=1$.  By the above discussion, we may assume that
  $\mathbf{X}_1$ is invertible.  Then a partitioned vector
  $(u,v)\in\mathbb{R}^p$ is in the kernel of $\mathbf{X}$ if and only
  if $u=-\mathbf{X}_1^{-1}\mathbf{X}_2v$.  Let
  $e=(1,\dots,1)^T\in\mathbb{R}^{p-n}$.  The claim is true if and only
  if the vector $u=-\mathbf{X}_1^{-1}\mathbf{X}_2e$ has first entry
  $u_1\not=0$.  Multiplying $u_1$ with $\det(\mathbf{X}_1)$ gives a
  polynomial $f(\mathbf{X})$ such that $u_1=0$ only if
  $f(\mathbf{X})=0$.  The lemma in \cite{okamoto:1973} yields the
  claim if we can argue that the product
  $f(\mathbf{X})\det(\mathbf{X}_1)$ is not the zero polynomial.  To
  this end, it is enough to exhibit one matrix $\mathbf{X}$ such that
  $u_1\not=0$ and $\det(\mathbf{X}_1)\not=0$.  Take $\mathbf{X}_1=I$
  and let $\mathbf{X}_2$ have a single nonzero entry
  $\mathbf{X}_{1,n+1}=-1$.  Then $u=(1,0,\dots,0)^T$.

  Because a union of null sets is a null set, the kernel of $\mathbf{X}$ almost
  surely contains a vector $q$ with $q_{n+1}=\dots=q_p=1$ and
  $q_j\not=0$ for all $j\le n$.  
\end{proof}

\begin{lemma}\label{lem:constr_barsigma}
  Let $q$ be any vector with all entries nonzero.  There exists
  a matrix $\Sigma \in \M(\mathcal{G})$, such that the vector $\Sigma q$ has
  precisely one nonzero entry.
\end{lemma}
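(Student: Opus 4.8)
The plan is to construct $\Sigma$ directly from the bidirected graph $\mathcal{G}$ together with the prescribed vector $q$. Since $\mathcal{G}$ is connected, we may pick a spanning tree $T$ of $\mathcal{G}$ rooted at some vertex $r$. The idea is to take $\Sigma$ of the form $\Sigma = \operatorname{diag}(d) + M$, where $M$ encodes the off-diagonal freedom allowed along the bidirected edges, and to choose the entries so that $(\Sigma q)_i = 0$ for every $i \neq r$ while $(\Sigma q)_r \neq 0$. Working down the tree, each equation $(\Sigma q)_i = 0$ for a leaf or for a vertex processed after its children has exactly one ``new'' unknown available to satisfy it — namely the covariance $\sigma_{i,\pa_T(i)}$ on the tree edge connecting $i$ to its parent — so the system can be solved sequentially; this is where connectivity of $\mathcal{G}$ is essential, because it guarantees a spanning tree exists so that every non-root vertex has such an edge inside $B$.

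First I would set up the recursion carefully: order the vertices $i_1, \dots, i_p = r$ so that every vertex appears before its parent in $T$, and for $k = 1, \dots, p-1$ choose the entry $\sigma_{i_k, \pa_T(i_k)}$ (equivalently the $M$-entry on that edge) to kill the $i_k$-th coordinate of $\Sigma q$, treating the diagonal entries $d_i$ and all previously chosen entries as fixed. The remaining entries of $M$ corresponding to non-tree edges of $B$ are simply set to zero. This produces a symmetric matrix $\Sigma$ with zeros exactly where $\mathcal{G}$ forbids nonzeros, and with $\Sigma q$ supported on $\{r\}$ by construction; one then checks that $(\Sigma q)_r \neq 0$, which will follow because $q^T \Sigma q = \sum_i q_i (\Sigma q)_i = q_r (\Sigma q)_r$ and we will arrange $q^T \Sigma q > 0$, using $q_r \neq 0$.

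The remaining obstacle is ensuring $\Sigma$ is \emph{positive definite}, not merely symmetric with the right sparsity. The recursion above pins down all off-diagonal entries of $\Sigma$ on the tree edges as fixed rational functions of the diagonal $d = (d_1, \dots, d_p)$ and of $q$; the off-tree entries are $0$. So I would treat the diagonal as a tuning parameter: as $d_i \to \infty$ for all $i$ at comparable rates, the constructed off-diagonal entries grow only linearly (each tree-edge entry is, up to sign, roughly $d_{\text{child}} q_{\text{child}} / q_{\text{parent}}$ plus lower-order corrections from already-fixed entries), so $\Sigma$ becomes strictly diagonally dominant once the $d_i$ are chosen large enough — for instance $d_i = t \cdot w_i$ with $w_i$ fixed positive weights absorbing the $q$-ratios, and $t \to \infty$. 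Diagonal dominance with positive diagonal then forces $\Sigma \succ 0$, placing $\Sigma$ in the open cone $\M(\mathcal{G}) = \M(B)$. I expect the bookkeeping in verifying the growth rate of the recursively defined entries — and hence diagonal dominance — to be the one genuinely delicate part; everything else is linear algebra that can be stated cleanly by induction on the tree. Finally, with $\Sigma \succ 0$ we get $q^T \Sigma q > 0$, so indeed $(\Sigma q)_r = q^T \Sigma q / q_r \neq 0$, completing the construction.
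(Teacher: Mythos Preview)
Your overall strategy---order the vertices so that each non-root vertex has its tree parent appearing later, then use the entry on that tree edge as the single free variable to kill one coordinate of $\Sigma q$ at a time---is essentially the paper's. The paper relabels $V$ so that every suffix $\{i,\dots,p\}$ induces a connected subgraph and, at stage $i$, picks some $i^*>i$ with $i\bi i^*\in B$; your spanning-tree ordering with $i^*=\pa_T(i)$ is a special case, and your argument that $(\Sigma q)_r\neq 0$ from $q^T\Sigma q>0$ matches the paper's.

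The gap is the positive-definiteness step: diagonal dominance cannot be obtained by the scaling $d_i=t\,w_i$. At a leaf $i$ of $T$, the only off-diagonal entry in row $i$ is $\sigma_{i,\pa_T(i)}=-d_i\,q_i/q_{\pa_T(i)}$, so strict diagonal dominance of that row reads $d_i>d_i\,|q_i/q_{\pa_T(i)}|$, i.e.\ $|q_i|<|q_{\pa_T(i)}|$. This is a condition on $q$ and the tree alone, entirely independent of $t$ and of the weights $w_j$. Concretely, on the bidirected path $1\bi 2\bi 3$ with $q=(3,1,2)^T$, whichever endpoint you root at, the leaf row violates dominance for every choice of diagonal. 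The ``lower-order corrections'' you anticipate are not lower order: every tree-edge entry your recursion produces is homogeneous of degree one in the $d_j$'s, so the off-diagonal-to-diagonal ratios are frozen as $t\to\infty$.

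The fix is to choose the diagonal entries sequentially rather than by a global scaling. In your ordering $i_1,\dots,i_p$, when you reach stage $k$ all entries $\sigma_{i_a,i_b}$ with $a,b<k$ and all $\sigma_{i_j,i_k}$ with $j<k$ are already fixed (the latter were set as $\sigma_{i_j,\pa_T(i_j)}$ when the children $i_j$ of $i_k$ were processed). So first choose $d_{i_k}$ large enough to make the $k$-th leading principal minor positive, and only then solve for $\sigma_{i_k,\pa_T(i_k)}$; since $\pa_T(i_k)$ has index larger than $k$, this last step does not touch the $k\times k$ block. This is exactly the paper's device, and it completes your construction.
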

\begin{proof}
  For a subset of nodes $A \subset V$, let $\mathcal{G}_A = (A,\emptyset,B_A)$
  be the subgraph of $\mathcal{G}$ induced by $A$, that is,
  $B_A = B \cap (A \times A)$.  
  Since $\mathcal{G}$ is connected, we may assume that the vertex set
  $V=\{1,\dots,p\}$ has been relabeled such that the induced subgraph
  $\mathcal{G}_{\{i+1,\dots,p\}}$ is connected for all $i=1,\dots,p-1$
  \cite[Prop.~1.4.1]{MR2744811}.  Figure~\ref{fig:bidi-relabeling}
  shows an example of a bidirected graph that is labeled in this way.

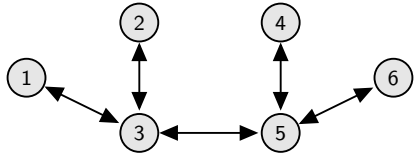
\begin{figure}[t]
  \centering
   \begin{center}
    \scalebox{.9}{
      \begin{tikzpicture}[->,>=triangle 45,shorten >=1pt,
        auto,thick,
        main node/.style={circle,inner
          sep=3pt,fill=gray!20,draw,font=\sffamily}]  
        
        \node[main node] (1) {1};
        \node[main node] (2) [above right=.4cm and 1.25cm of 1]{2}; 
        \node[main node] (3) [below right=.4cm and 1.25cm of 1] {3}; 
        \node[main node] (4) [right=1.5cm of 2] {4};
        \node[main node] (5) [right=1.5cm of 3] {5};
        \node[main node] (6) [above right=.4cm and 1.25cm of 5] {6};
      
        \path[<->,every node/.style={font=\sffamily\small}] 
        (1) edge node {} (3)
        (2) edge node {} (3)
        (3) edge node {} (5)
        (5) edge node {} (6)
        (4) edge node {} (5)
        ;        
      \end{tikzpicture}
    }
  \end{center}
  \caption{A bidirected graph labeled in such a way that for any
    $j$ the nodes $i\ge j$ induce a connected subgraph.}
  \label{fig:bidi-relabeling}
\end{figure}

  We now show how to construct $\Sigma = (\sigma_{kl})\in\M(\mathcal{G})$ such
  that $(\Sigma q)_j=0$ for all $j<p$.  Since $q\not=0$ and $\Sigma$
  will be positive definite, we then have $(\Sigma q)_p\not=0$.  As
  $\Sigma$ must be symmetric, we only have to specify the entries
  $\bar{\sigma}_{kl}$ with $k \leq l$.  

  We construct $\Sigma$ one row (and by symmetry, column) at a time
  according to the following iterative procedure.  At stage
  $i=1,\dots,p$, the first $i-1$ rows and columns have been specified;
  none when $i=1$.  Let $\Sigma_{[i],[i]}=(\sigma_{kl})_{k,l\le i}$ be
  the $i$-th leading principal submatrix.  We set $\sigma_{ii}$ to be
  the smallest natural number with the property that
  $\det(\Sigma_{[i],[i]})>0$; that such a choice is possible is clear
  from a Laplace expansion of the determinant.  For $i=1$, we get
  $\sigma_{ii}=1$.  Next, as long as $i<p$, we choose $i^*\in \{i+1,\dots,p\}$
  such that $i\bi i^*\in B$, which is possible because
  $\mathcal{G}_{\{i,\dots,p\}}$ is connected.  For all $k\ge i+1$ and
  $k\not=i^*$, we set $\sigma_{ik}=0$ if $i\bi k\notin B$ and
  $\sigma_{ik}=1$ if $i\bi k\in B$.  We then complete the $i$-th row
  and column by setting
  \[
    \sigma_{i{i^*}} = - \sum_{l \in V\backslash \{{i^*}\}}
    \sigma_{il} q_l / q_{i^*}\,;
  \]
  the division by $q_{i^*}$ is well-defined as all entries of $q$ are
  nonzero. 
		
  By construction, the matrix $\Sigma$ is positive definite as all
  leading principal minors are positive.  Moreover, $\Sigma_{ij}=0$
  whenever $i\not=j$ and $i\bi j \notin B$.  It follows that
  $\Sigma \in \M(B)=\M(\mathcal{G})$.  Finally, for all $i\le p-1$,
  \[
    \left(\Sigma q\right)_i
    \,=\, \sigma_{ii^*} q_{i^*} + \sum_{l \neq i^*} \sigma_{i l} q_l\,=\,
      - \left(\sum_{l \neq i^*} \sigma_{il} \frac{q_l}{q_{i^*}}\right) q_{i^*} +
      \sum_{l \neq i^*} \sigma_{kl} q_l = 0.
      \qedhere
      \]
\end{proof}

\begin{proof}[Proof of Theorem~\ref{thm:bidirected}]
By Lemma~\ref{lem:saturated}(a), we have $\obs{\mathcal{G}}\le p$.  Hence, we need to
show that the likelihood function is a.s.~unbounded if $n<p$.

Assume that $n < p$.  By Lemma~\ref{lem:zero_entries}, the kernel of
the sample covariance matrix $S_{0,n}$ a.s.~contains a vector
$q$ with all entries nonzero.    By
Lemma~\ref{lem:constr_barsigma}, we may choose a matrix $\Sigma$ such
that $\Sigma q$ has one nonzero entry.  Without loss of generality, we
assume the vertex set to be labeled such that $\Sigma q = c e_p$,
where $c\in\mathbb{R}\setminus\{0\}$ and
$e_p=(0,\dots,0,1)^T\in\mathbb{R}^p$.  Based on these choices, we will
define a sequence of covariance matrices
$\left\{\Sigma_t\right\}_{t=1}^\infty$ in $\M(\mathcal{G})$, with the
property that
$\lim_{t\rightarrow \infty}\ell(\Sigma_t\,|\,S_{0,n}) = \infty$.  This
then implies that the likelihood function is a.s.~unbounded.

%
For $t\ge 0$, define
\begin{align}
  \Sigma_t \;:=\; \Sigma - \frac{1}{\frac{1}{t} + q^T\Sigma q} \Sigma qq^T\Sigma.
  \label{eq:sigma_t}
\end{align}
Since $\Sigma q = c e_p$, the matrix
$(\Sigma q)(\Sigma q)^T=c^2 e_pe_p^T$ is zero with the exception of
the $(p,p)$ entry that equals $c^2>0$.  Hence, $\Sigma_t$ has
zeros in the same entries as $\Sigma\in\M(\mathcal{G})$ does.  Let $K= (\Sigma)^{-1}$.
By the Woodbury matrix identity \citep{woodbury:1950},
\begin{align*}
	K_t := \left(\Sigma_t\right)^{-1} \;=\; K + t qq^T.
\end{align*}
For all $t\ge 0$, the matrix $K_t$ is positive definite because $K$ is
positive definite and $qq^T$ positive semidefinite.  Thus, $\Sigma_t$
is positive definite for all $t\ge 0$ as well.  We conclude that
$\Sigma_t\in\M(\mathcal{G})$ for all $t\ge 0$.  

Inserting $\Sigma_t$ into the log-likelihood function from
\eqref{eq:likelihood}, we have
\begin{align*}
  \ell(\Sigma_t\,|\,S_{0,n}) 
  &=  \log\det\left(K_t\right) - \trace\left(K_t S_{0,n}\right)\\
  &= \log\det \left(K + t qq^T\right) -
    \trace\left(K S_{0,n}\right) - t \,q^T S_{0,n} q\\
  &= \log\det \left(K + t qq^T\right) -
    \trace\left(K S_{0,n}\right)
\end{align*}
because $q$ is in the kernel of $S_{0,n}$.  By the matrix determinant
lemma,
\[
\det \left(K + t qq^T\right) \;=\; (1+t\, q^T\Sigma q) \det(\Sigma),
\]
which converges to infinity as $t\to\infty$ because $\det(\Sigma)>0$
and $q^T\Sigma q>0$ by positive definiteness of $\Sigma$.
\end{proof}

\section{Lower bound from submodels}
\label{sec:lower-bound}

We return to the case where $\mathcal{G}=(V,D,B)$ is an arbitrary, possibly
cyclic, mixed graph.  The following result uses the characterization
of the maximum likelihood threshold for bidirected graphs to yield a lower
bound on $\obs{\mathcal{G}}$.

\begin{theorem}
  \label{thm:inequalities}
  Let $\mathcal{G} = (V,D,B)$ be a mixed graph, and let $C_1,\dots,C_l$ be the
  vertex sets of the connected components of its bidirected part
  $\mathcal{G}_\bi=(V,\emptyset,B)$. Then
  \[
    \obs{\mathcal{G}} \;\ge\; \max_{j=1,\dots,l} \,|\Pa(C_j)|.
  \]
  Moreover, if $n<\obs{\mathcal{G}}$ then $\hat\ell(\mathcal{G}\,|\, S_{0,n})=\infty$ a.s.
\end{theorem}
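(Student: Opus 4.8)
The plan is to exhibit, for each connected component $C_j$ of $\mathcal{G}_\bi$, a subgraph $\mathcal{H}_j$ of $\mathcal{G}$ on vertex set $\Pa(C_j)$ whose model has $\obs{\mathcal{H}_j} = |\Pa(C_j)|$, and whose likelihood is a.s.\ unbounded below that sample size. Since $\mathcal{H}_j$ is a subgraph of $\mathcal{G}$, Lemma~\ref{lem:saturated}(c) gives $\obs{\mathcal{G}} \ge \obs{\mathcal{H}_j} = |\Pa(C_j)|$, and taking the maximum over $j$ yields the inequality; the ``moreover'' part will follow by tracking unboundedness through the same comparison. So fix $j$ and write $A = C_j$, $P = \Pa(C_j) = A \cup \bigcup_{i\in A}\pa(i)$. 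The natural candidate for $\mathcal{H}_j$ is the graph on vertex set $P$ that keeps all directed edges $k \to i$ of $\mathcal{G}$ with $i \in A$ and $k \in P$ (these exist by definition of $\Pa(A)$) together with \emph{all} bidirected edges of $\mathcal{G}$ among vertices of $A$ — equivalently, after passing as in Theorem~\ref{thm:bounding-above-cyclic} to the graph $\mathcal{G}'$ whose bidirected part is a disjoint union of cliques, we may take the bidirected part of $\mathcal{H}_j$ on $A$ to be complete.

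The key idea is a change of variables that turns $\mathcal{H}_j$ into a bidirected graph, so that Theorem~\ref{thm:bidirected} applies. Concretely, for a matrix $\Sigma$ in the model of $\mathcal{H}_j$, the error-term covariance $\Omega$ has a nonzero block only on $A$, where it ranges over all positive definite $|A|\times|A|$ matrices (since that bidirected part is complete), while the directed edges only enter through $I-\Lambda$. I expect the cleanest route is: enlarge the bidirected block to all of $P$ by observing that the model on $\mathcal{H}_j$ \emph{contains} the model of the connected bidirected graph on $P$ obtained by adding bidirected edges so that $P$ becomes one bidirected-connected component — but this requires the directed edges of $\mathcal{H}_j$ to be able to ``reach'' the parent nodes, which they do not (parents have no incoming edges in $\mathcal{H}_j$, so they contribute degenerate directions). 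So instead I would argue directly: the submatrix $\Sigma_{P,P}$ of any $\Sigma \in \M(\mathcal{H}_j)$ is the covariance of the vector $(X_k)_{k\in P}$, and because each $X_i$ for $i\in A$ equals $\sum_{k\in\pa(i)}\lambda_{ki}X_k + \epsilon_i$ with $(\epsilon_i)_{i\in A}$ having arbitrary positive definite covariance and $(X_k)_{k\in P\setminus A}$ unconstrained (they are source nodes with their own free variances), the set $\{\Sigma_{P,P} : \Sigma\in\M(\mathcal{H}_j)\}$ is in fact \emph{all} of the $|P|\times|P|$ positive definite cone. Hence $\obs{\mathcal{H}_j} = |P|$ by Lemma~\ref{lem:saturated}(a), and moreover for $n < |P|$ the likelihood is a.s.\ unbounded: $S_{0,n}$ restricted to $P$ is a.s.\ singular, pick $q$ in its kernel, extend $q$ by zero off $P$, and run the same determinant-lemma argument as in the proof of Theorem~\ref{thm:bidirected} with a $\Sigma$ supported off $P$ only on its diagonal.

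The main obstacle is the direction-of-edges bookkeeping in the reduction: one must be careful that the source nodes $P\setminus A$ genuinely contribute free, independent variances (they have no parents \emph{within} $\mathcal{H}_j$ and no bidirected neighbors in $\mathcal{H}_j$, so they are isolated-in-direction nodes whose only model parameter is $\omega_{kk}>0$), and that the map $(\Lambda,\Omega)\mapsto \Sigma_{P,P}$ is genuinely onto the full PD cone — this is where one invokes that $(I-\Lambda)_{P,P}$ is unipotent upper-triangular after a suitable topological-order relabeling of $P$, so it is invertible and $\Sigma_{P,P} = (I-\Lambda)_{P,P}^{-T}\,\widetilde\Omega\,(I-\Lambda)_{P,P}^{-1}$ with $\widetilde\Omega$ block-diagonal and free on each of its blocks, giving surjectivity by a Cholesky-type argument. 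Once that surjectivity is in hand, invoking Theorem~\ref{thm:bidirected} (applied to the complete bidirected graph on $P$, whose model is exactly the full PD cone) delivers both $\obs{\mathcal{H}_j}=|P|$ and the a.s.\ unboundedness for $n<|P|$, and Lemma~\ref{lem:saturated}(c) finishes the proof of Theorem~\ref{thm:inequalities}.
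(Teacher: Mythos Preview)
Your central claim --- that $\M(\mathcal{H}_j)$ is the full positive definite cone on $P=\Pa(C_j)$ --- is false, and this is a genuine gap, not a bookkeeping issue. Each source node $k\in P\setminus A$ has no parents and no bidirected neighbours in $\mathcal{H}_j$, so $X_k=\epsilon_k$ with the $\epsilon_k$ for distinct $k\in P\setminus A$ mutually independent. Consequently $\Sigma_{P\setminus A,\,P\setminus A}$ is \emph{always diagonal} for every $\Sigma\in\M(\mathcal{H}_j)$; as soon as $|P\setminus A|\ge 2$, the model is a proper subset of the cone. Your ``Cholesky-type argument'' cannot rescue this: in $\Sigma=(I-\Lambda)^{-T}\widetilde\Omega(I-\Lambda)^{-1}$ the factor $\widetilde\Omega$ is block-diagonal with a \emph{diagonal} block on $P\setminus A$, and the columns of $I-\Lambda$ indexed by $P\setminus A$ are the identity columns, so no choice of $\Lambda$ can introduce correlation among the source variables. (Separately, your assertion that $(I-\Lambda)_{P,P}$ admits a topological ordering fails whenever $\mathcal{G}$ has a directed cycle inside $C_j$, but even in the acyclic case the surjectivity claim is wrong.)

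The paper repairs exactly this point by \emph{not} aiming for the complete bidirected graph on $P$. Instead it thins $\mathcal{H}_j$ further, retaining for each $k\in P\setminus A$ only \emph{one} of its outgoing edges $k\to i_k$, and then converts that single edge to a bidirected edge $k\bi i_k$. A model-equivalence result (a source node with exactly one child may have its directed edge replaced by a bidirected edge without changing the covariance model) shows $\M(\mathcal{H}_j)=\M(\mathcal{H}_j^\bi)$ for the resulting bidirected graph $\mathcal{H}_j^\bi$. That graph is connected on all of $P$ (the source nodes are attached to the already-connected bidirected component on $A$), so Theorem~\ref{thm:bidirected} gives $\obs{\mathcal{H}_j^\bi}=|P|$ together with the almost-sure unboundedness below $|P|$; Lemma~\ref{lem:saturated}(c) then transfers both conclusions to $\mathcal{G}$.
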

\begin{proof}
  For $j=1,\dots, l$, let $B_j = B \cap (C_j \times C_j)$ and
  $D_j = D \cap (\Pa(C_j) \times C_j)$.  In other words, $B_j$ is the
  set of bidirected edges between nodes in $C_j$, while $D_j$ is the
  set of directed edges with head in $C_j$.  The sets $B_j$ and
  $D_j$ partition $B$ and $D$, respectively.  The graphs
  $\mathcal{G}_j = (\Pa(C_j), D_j, B_j)$ thus form a decomposition of $\mathcal{G}$. 
  Because each graph $\mathcal{G}_j$ is a subgraph of $\mathcal{G}$,
  Lemma~\ref{lem:saturated}(c) yields that
  \[
    \obs{\mathcal{G}} \;\ge\; \max_{j=1,\dots,l}\,\obs{\mathcal{G}_j}.
  \]
  
  Next, for each $j$, choose a subgraph $\mathcal{H}_j$ of $\mathcal{G}_j$ by taking the
  bidirected part of $\mathcal{G}_j$ and adding for each node in
  $\Pa(C_j)\setminus C_j$ precisely one of its outgoing directed
  edges.  Then let $\mathcal{H}_j^\bi$ be the bidirected graph obtained by
  converting the directed edges of $\mathcal{H}_j$ into bidirected edges.  An
  example is shown in Figure~\ref{fig:intro_example:G4-H4}.  Since in
  $\mathcal{H}_j$ each node $i\in\Pa(C_j)\setminus C_j$ is the parent of
  precisely one node in $C_j$, it follows from Theorem 5 in
  \cite{drton:2008} that $\mathcal{H}_j$ and $\mathcal{H}_j^\bi$ define the same set of
  covariance matrices.  Consequently,
  \[
    \M(\mathcal{H}_j^\bi) \;=\; \M(\mathcal{H}_j) \;\subseteq \; \M(\mathcal{G}_j).
  \]
  Now use Lemma~\ref{lem:saturated}(c) and apply
  Theorem~\ref{thm:bidirected} to the connected bidirected graph
  $\mathcal{H}_j^\bi$ to conclude that
  \[
    \obs{\mathcal{G}_j}\;\ge\;\obs{\mathcal{H}_j}  \;=\;\obs{\mathcal{H}_j^\bi} \;=\; |\Pa(C_j)|.
    \qedhere
  \]
\end{proof}

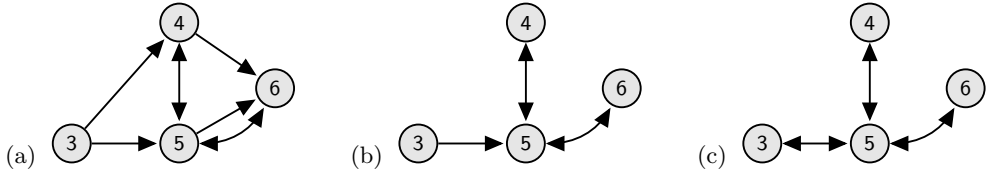
\begin{figure}[t]
  \centering
   \begin{center}
     \begin{tabular}{lll}
       (a)
       \scalebox{.9}{
      \begin{tikzpicture}[->,>=triangle 45,shorten >=1pt,
        auto,thick,
        main node/.style={circle,inner
          sep=3pt,fill=gray!20,draw,font=\sffamily}]  
        
        \node[main node] (3) {3}; 
        \node[main node] (5) [right=1cm of 3] {5};
        \node[main node] (4) [above=1.2cm of 5] {4};
        \node[main node] (6) [above right=.4cm and 1cm of 5] {6};
      
        \path[every node/.style={font=\sffamily\small}] 
        (3) edge node {} (4)
        (3) edge node {} (5)
        (4) edge node {} (6)
        (5) edge node {} (6);
        \path[<->,every node/.style={font=\sffamily\small}, bend right] 
        (5) edge node {} (6);
        \path[<->,every node/.style={font=\sffamily\small}] 
        (4) edge node {} (5)
        ;        
      \end{tikzpicture}
    } \ \  &
        (b)
       \scalebox{.9}{
      \begin{tikzpicture}[->,>=triangle 45,shorten >=1pt,
        auto,thick,
        main node/.style={circle,inner
          sep=3pt,fill=gray!20,draw,font=\sffamily}]  
        
        \node[main node] (3) {3}; 
        \node[main node] (5) [right=1cm of 3] {5};
        \node[main node] (4) [above=1.2cm of 5] {4};
        \node[main node] (6) [above right=.4cm and 1cm of 5] {6};
      
        \path[every node/.style={font=\sffamily\small}] 
        (3) edge node {} (5);
        \path[<->,every node/.style={font=\sffamily\small}, bend right] 
        (5) edge node {} (6);
        \path[<->,every node/.style={font=\sffamily\small}] 
        (4) edge node {} (5)
        ;        
      \end{tikzpicture}
    } \ \   &
        (c)
       \scalebox{.9}{
      \begin{tikzpicture}[->,>=triangle 45,shorten >=1pt,
        auto,thick,
        main node/.style={circle,inner
          sep=3pt,fill=gray!20,draw,font=\sffamily}]  
        
        \node[main node] (3) {3}; 
        \node[main node] (5) [right=1cm of 3] {5};
        \node[main node] (4) [above=1.2cm of 5] {4};
        \node[main node] (6) [above right=.4cm and 1cm of 5] {6};
      
        \path[<->,every node/.style={font=\sffamily\small}] 
        (3) edge node {} (5);
        \path[<->,every node/.style={font=\sffamily\small}, bend right] 
        (5) edge node {} (6);
        \path[<->,every node/.style={font=\sffamily\small}] 
        (4) edge node {} (5)
        ;        
      \end{tikzpicture}
    }   
      \end{tabular}
  \end{center}
  \caption{In reference to the graph from
    Figure~\ref{fig:intro_example}(b), the panels show: (a) the
    connected component
    $\mathcal{G}_4$ with vertex set $C_4=\{4,5,6\}$, (b) a
    choice for $\mathcal{H}_4\subset \mathcal{G}_4$, and (c) the bidirected graph $\mathcal{H}_4^\bi$.}
  \label{fig:intro_example:G4-H4}
\end{figure}

\section{Numerical experiment}
\label{sec:numerical-experiment}

A model with low maximum likelihood threshold is amenable to standard
likelihood inference even when the modeled observations are
high-dimensional and the sample size is rather small. We demonstrate
this for a structural equation model associated with a directed graph
and allowing for cycles.
Specifically, we consider a graph $\mathcal{G}_p=(V_p,E_p)$ with
an even number $p$ of nodes.  As previously, we enumerate the vertex
set as $V_p=\{1,\dots,p\}$.  Let $p'=p/2$, and define the edge set as
$E_p=E^{(1)}_p\cup E^{(2)}_p\cup E^{(3)}_p$, where
\begin{align*}
  E^{(1)}_p &= \left\{ i \to i+1: i=1,\dots,p'-1\right\}\cup\{ p'\to 1\},\\
  E^{(2)}_p &= \left\{ i+3\to i : i=1,\dots,p'-3\right\}\cup\{ 1\to p'-2\}\cup\{2 \to p'-1\}\cup\{3 \to p'\},\\
  E^{(3)}_p &= \left\{ p'+i\to i : i=1,\dots,p'\right\}.
\end{align*}
The first set of edges defines a directed cycle of length $p'$, and
the second set of edges gives many shorter cycles of length 4.  The
third set of edges attaches, in bipartite fashion, additional nodes
that play the role of covariates; one covariate for each node in the
long cycle.  Figure~\ref{fig:numerical-exp} illustrates this with a
picture of $\mathcal{G}_{40}$.

As a statistical problem we consider testing absence of the edge
$1\to 2$ from the graph $\mathcal{G}_{100}$.  In other words, we test the
hypothesis $H_0:\lambda_{12}=0$ in the model given by $\mathcal{G}_{100}$.  The
parametrization for $\mathcal{G}_{100}$ is generically one-to-one as can be
confirmed, for instance, using the half-trek criterion
\citep{foygel:draisma:drton:2012,semid}.  Assuming zero means for the
$p=100$ dimensional observation vector, the model corresponds to a
$p+3p/2=250$ dimensional set of covariance matrices.  We test $H_0$ using the likelihood ratio test for three rather
small sample sizes, namely, $n=15$, $20$, and $25$.  Our main
result guarantees that the test is well-defined as the log-likelihood
function for $\mathcal{G}_{100}$ a.s.~admits a finite supremum at
these sample sizes.  The optimization needed to compute the likelihood
ratio statistics is performed using the algorithm of \cite{drton-fox-wang}.

\begin{figure}
  \centering
  \scalebox{.9}{
    \begin{tikzpicture}[->,>=latex',shorten >=1pt,
      auto,thick,
      main node/.style={circle,inner
        sep=3pt,fill=gray!20,draw,font=\sffamily}]
      
      \foreach \a in {1,2,...,20}{
        \draw     node[main node]  (in\a) at  (\a*360/20: 3cm) { }; }
      
      \foreach \a in {1,2,...,20}{
        \draw     node[main node]  (out\a) at  (\a*360/20: 4cm) {
        }; 
        \path[->,every node/.style={font=\sffamily\small}]
        (out\a) edge node {} (in\a);
      }

      \path[->,every node/.style={font=\sffamily\small},bend right]
      (in1) edge (in18)
      (in2) edge (in19)
      (in3) edge (in20)
      (in4) edge (in1)
      (in5) edge (in2)
      (in6) edge (in3)
      (in7) edge (in4)
      (in8) edge (in5)
      (in9) edge (in6)
      (in10) edge (in7)
      (in11) edge (in8)
      (in12) edge (in9)
      (in13) edge (in10)
      (in14) edge (in11)
      (in15) edge (in12)
      (in16) edge (in13)
      (in17) edge (in14)
      (in18) edge (in15)
      (in19) edge (in16)
      (in20) edge (in17)
      ;

      \path[->,every node/.style={font=\sffamily\small}] 
      (in1) edge (in2);
      \path[->,every node/.style={font=\sffamily\small}]
      (in2) edge (in3)
      (in3) edge (in4)
      (in4) edge (in5)
      (in5) edge (in6)
      (in6) edge (in7)
      (in7) edge (in8)
      (in8) edge (in9)
      (in9) edge (in10)
      (in10) edge (in11)
      (in11) edge (in12)
      (in12) edge (in13)
      (in13) edge (in14)
      (in14) edge (in15)
      (in15) edge (in16)
      (in16) edge (in17)
      (in17) edge (in18)
      (in18) edge (in19)
      (in19) edge (in20)
      (in20) edge (in1)
      ;



      \end{tikzpicture}
    }   
  \caption{A directed graph with cycles and maximum in-degree 3.}
  \label{fig:numerical-exp}
\end{figure}
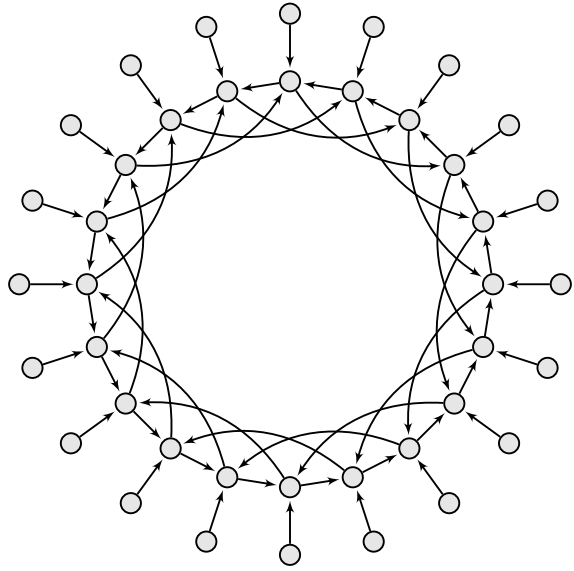

For each sample size, we use 200 Monte Carlo simulations to
approximate the size of the test as well as its power at nonzero
values of $\lambda_{12}$.  Specifically, we consider the setting where
$\lambda_{12}$ ranges through $[-1,1]$, and all other edge
coefficients are set to $1/3$.  We consider nominal significance level
0.05 and calibrate the likelihood ratio test using a chi-square
distribution with 1 degree of freedom.  A chi-square limiting
distribution cannot always be expected \citep{drton:lrt}, but is valid
at the considered identifiable parameter.  The power functions
are plotted in Figure~\ref{fig:power}.  The asymptotically calibrated
test clearly exhibits good power at stronger signals and is seen to be
only slightly liberal.  This suggests that likelihood inference allows
one to perform model selection in high-dimensional but sparse cyclic
models.

\begin{figure}[t]
  \centering
  \includegraphics[scale=.75]{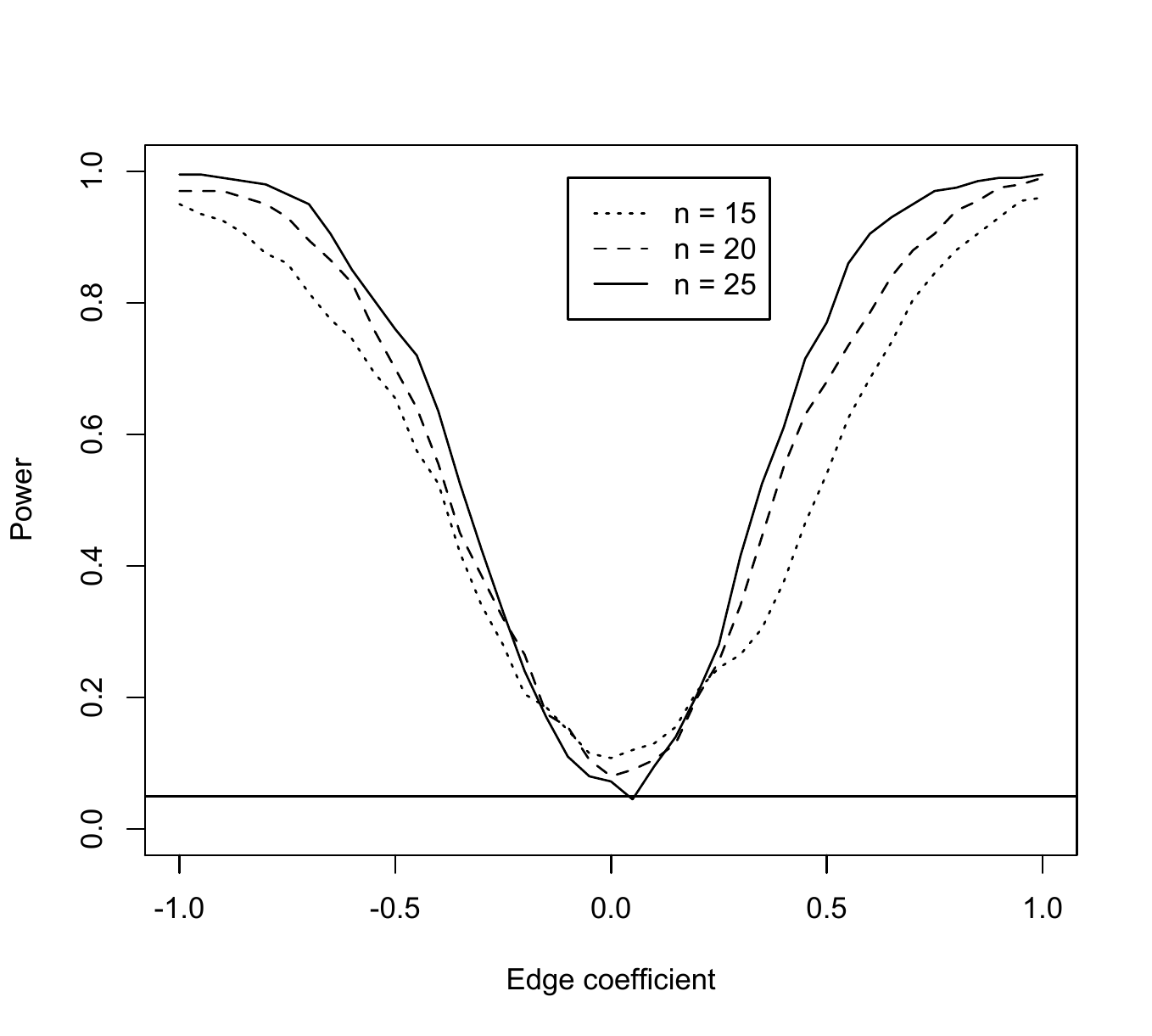}
  \caption[power]{Monte Carlo approximation to power function of a
    likelihood ratio test.}
  \label{fig:power}
\end{figure}

\section{Connections to undirected graphical models}
\label{sec:undirected}

The structural equation models we considered are closely related to
the Gaussian graphical models given by undirected graphs
\citep{lauritzen:1996}.  The latter are dual to the models given by
bidirected graphs in the sense that it is not the covariance matrix
but its inverse that is supported over the graph \citep{MR1380485}.  To be more precise,
let $\bar{\mathcal{G}}=(V,E)$ be an undirected graph, whose edges we take to be
unordered pairs $\{i,j\}$ comprised of two distinct nodes $i,j\in
V=\{1,\dots,p\}$.  
Let
$\M(\bar{\mathcal{G}})$ be the cone of positive definite $p \times p$ matrices
$K=(\kappa_{ij})$ with $\kappa_{ij} = 0$ whenever $i \neq j$ and
$\{i,j\} \notin E$.  The Gaussian graphical model given by $\bar{\mathcal{G}}$ is
the set of multivariate normal distributions $\mathcal{N}(\mu,\Sigma)$
with arbitrary mean vector $\mu\in\mathbb{R}^p$ and a covariance
matrix that is constrained to have $\Sigma^{-1}\in\M(\bar{\mathcal{G}})$.


Suppose $\mathcal{G}=(V,D,\emptyset)$ is an acyclic digraph that is perfect,
that is, $i,j\in\pa(k)$ implies that $i$ and $j$ are adjacent.  Then
$\mathit{PD}(\mathcal{G})$ is equal to the set of covariance matrices of the
Gaussian graphical model given by the skeleton of $\mathcal{G}$ \cite[see
e.g.][Corollary~4.1, 4.3]{MR1439312}.  The skeleton is the undirected
graph $\mathcal{G}_-=(V,E)$ with $\{i,j\}\in E$ whenever $i$ and $j$ are
adjacent in $D$.  When $\mathcal{G}$ is perfect then $\mathcal{G}_-$ is chordal.
Theorem~\ref{thm:dags} implies that the maximum likelihood threshold
of the Gaussian graphical model of a chordal graph is the maximum
clique size; see \citet[Theorem 7]{MR739282} or \citet[Theorem
3.2]{buhl:1993}.

The maximum likelihood threshold of graphical models given by
non-chordal graphs is more subtle to derive.  Many interesting results
exist but the threshold has not yet been determined in generality
\citep{buhl:1993,MR3014306,Sullivant:Gross}.  Moreover, it has been
shown for a sample size below the threshold that the likelihood may be
bounded with positive probability.  In the remainder of this section,
we focus on chordless cycles, which were the first known examples of
this phenomenon.  We note that in the literature the maximum
likelihood threshold for Gaussian graphical models is typically
introduced as the minimum sample size at which the likelihood function
admits a maximizer.  The maximizer is then unique by strict convexity
of the log-likelihood function as a function of the inverse covariance
matrix.  By the duality theory in \cite{MR2440363}, if the likelihood
function of a Gaussian graphical model does not achieve its maximum,
then it is unbounded; see also Theorem 9.5 in \cite{MR489333}.

\begin{example}
  Let $\bar{\mathcal{C}}_p$ be the undirected chordless cycle with vertex set
  $V=\{1,\dots,p\}$ and edge set
  $E=\{\{1,2\},\{2,3\},\dots,\{p-1,p\},\{1,p\}\}$ for $p\ge 3$.
  Assuming the mean vector $\mu$ to be zero, the Gaussian graphical
  model given by $\bar{\mathcal{C}}_p$ has maximum likelihood threshold
  $\obs{\bar{\mathcal{C}}_p}=3$.  However, if $n=2$, then the likelihood function
  of the model with zero means is bounded, and achieves its maximum,
  with positive probability \citep[Theorem 4.1]{buhl:1993}.
\end{example}

Let $\M(\bar{\mathcal{C}}_p)^{-1}$ be the set of matrices with an inverse in
$\M(\bar{\mathcal{C}}_p)$.  In other words, $\M(\bar{\mathcal{C}}_p)^{-1}$ is the set of
covariance matrices of the graphical model for $\bar{\mathcal{C}}_p$.  If an
acyclic digraph $\mathcal{G}=(V,D,\emptyset)$ satisfies
$\M(\mathcal{G})\subseteq \M(\bar{\mathcal{C}}_p)^{-1}$ then $\M(\mathcal{G})$ has smaller dimension
than $\M(\bar{\mathcal{C}}_p)^{-1}$.  If $\M(\mathcal{G})\supseteq \M(\bar{\mathcal{C}}_p)^{-1}$, then
the dimension of $\M(\mathcal{G})$ is larger.  However, a subset of the same
dimension is found when considering digraphs with cycles.
Specifically, take $\mathcal{C}_p=(V,D,\emptyset)$ to be the digraph with vertex
set $V=\{1,\dots,p\}$ and edge set
$D=\{1\to 2,2\to 3,\dots,p-1\to p,p\to 1\}$.  Then
$\M(\mathcal{C}_p)\subseteq \M(\bar{\mathcal{C}}_p)^{-1}$.  Indeed, if
$\Sigma=(I-\Lambda)^{-T}\Omega(I-\Lambda)^{-1}$ for $\Lambda\in\RDreg$
and $\Omega\in\M(B)$, then
$\Sigma^{-1}=(I-\Lambda)\Omega^{-1}(I-\Lambda)^T$ has entries
\begin{equation}
  \label{eq:p-cycle-matrix}
\Sigma^{-1}_{ij}=
\begin{cases}
  \frac{1}{\omega_{ii}} + \frac{\lambda_{i,i+1}^2}{\omega_{i+1,i+1}} &\text{ if
  } i=j,\\
  -\frac{\lambda_{i,i+1}}{\omega_{i+1,i+1}} &\text{ if }
  j\in\{i-1,i+1\},\\
  0 &\text{ otherwise}.
\end{cases}
\end{equation}
Here, we identify $p+1\equiv 1$.
Recall that for a digraph $\Omega=(\omega_{ij})$ is diagonal.
The zeros in~(\ref{eq:p-cycle-matrix}) now confirm that
$\M(\mathcal{C}_p)\subseteq \M(\bar{\mathcal{C}}_p)^{-1}$.  Moreover, $\M(\mathcal{C}_p)$ is a
full-dimensional subset as both $\M(\mathcal{C}_p)$ and $\M(\bar{\mathcal{C}}_p)$ clearly have
dimension $2p$.

\begin{example}
  \label{ex:4cycle}
  The graphs $\bar{\mathcal{C}}_4$ and $\mathcal{C}_4$ are depicted in
  Figure~\ref{fig:4cycles}.  A matrix in $\M(\mathcal{C}_4)$ is parameterized as
  \begin{equation}
    \label{eq:4cycle-matrix}
    \begin{pmatrix}
      \frac{1}{\omega_{11}} + \frac{\lambda_{12}^2}{\omega_{22}} 
      & -\frac{\lambda_{12}}{\omega_{22}} & 0 &
      -\frac{\lambda_{41}}{\omega_{11}}\\
      -\frac{\lambda_{12}}{\omega_{22}} & \frac{1}{\omega_{22}} +
      \frac{\lambda_{23}^2}{\omega_{33}}  & -\frac{\lambda_{23}}{\omega_{33}}
      & 0\\
      0 & -\frac{\lambda_{23}}{\omega_{33}} & \frac{1}{\omega_{33}} +
      \frac{\lambda_{34}^2}{\omega_{44}}  &
      -\frac{\lambda_{34}}{\omega_{44}}\\
      -\frac{\lambda_{41}}{\omega_{11}}& 0 &-\frac{\lambda_{34}}{\omega_{44}}
      &\frac{1}{\omega_{44}} + 
      \frac{\lambda_{41}^2}{\omega_{11}}
    \end{pmatrix}.
  \end{equation}
\end{example}

\begin{figure}[t]
  \centering
   \begin{center}
     \begin{tabular}{lll}
       (a)
       \scalebox{.9}{
      \begin{tikzpicture}[shorten >=1pt,
        auto,thick,
        main node/.style={circle,inner
          sep=3pt,fill=gray!20,draw,font=\sffamily}]  
        
        \node[main node] (1) {1}; 
        \node[main node] (2) [right=1.5cm of 1] {2};
        \node[main node] (3) [below=1cm of 2] {3};
        \node[main node] (4) [left=1.5cm of 3] {4};
      
        \path[every node/.style={font=\sffamily\small}] 
        (1) edge (2) (2) edge (3) (3) edge (4) (4) edge (1);
        ;        
      \end{tikzpicture}
    } \qquad\qquad\qquad  &
        (b)
       \scalebox{.9}{
      \begin{tikzpicture}[->,>=triangle 45,shorten >=1pt,
        auto,thick,
        main node/.style={circle,inner
          sep=3pt,fill=gray!20,draw,font=\sffamily}]  
        
        \node[main node] (1) {1}; 
        \node[main node] (2) [right=1.5cm of 1] {2};
        \node[main node] (3) [below=1cm of 2] {3};
        \node[main node] (4) [left=1.5cm of 3] {4};
      
        \path[every node/.style={font=\sffamily\small}] 
        (1) edge (2) (2) edge (3) (3) edge (4) (4) edge (1);
        ;        
      \end{tikzpicture}
    }   
      \end{tabular}
  \end{center}
  \caption{(a) The undirected cycle $\bar{\mathcal{C}}_4$.  (b) The
    directed cycle $\mathcal{C}_4$.}
  \label{fig:4cycles}
\end{figure}
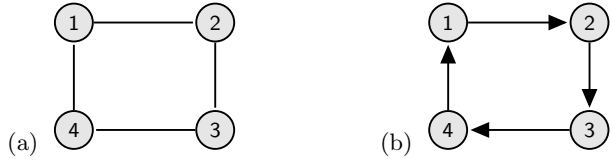

By Theorem 4.1 in \cite{buhl:1993}, $\obs{\bar{\mathcal{C}}_p}=3$ for all
$p\ge 3$.  In contrast, our new Theorem~\ref{thm:intro} implies that
$\obs{\mathcal{C}_p}=2$ for all $p\ge 3$.  Consequently, it must hold that
$\M(\mathcal{C}_p)\subsetneq \M(\bar{\mathcal{C}}_p)^{-1}$.  Indeed, the set $\M(\mathcal{C}_p)$
comprises matrices that satisfy an additional inequality.  Applying a
trick used by \cite{drtonyu:2010} in a different context, observe that
negating the entry $\lambda_{12}$ changes only the entries
$\Sigma^{-1}_{12}$ and $\Sigma^{-1}_{21}$, which are negated.  All
other entries of $\Sigma^{-1}$ are preserved under the sign change of
$\lambda_{12}$.  The inequality is obtained by noting that not every
positive definite matrix in $\M(\mathcal{C}_p)$ remains positive definite after
negation of a single off-diagonal entry \cite[Example
5.2]{drtonyu:2010}.  We conclude that if a sample of size 2 has the
likelihood function given by $\bar{\mathcal{C}}_p$ unbounded, then the divergence
occurs only along sequences of matrices that do not represent a system
with a feedback cycle as in $\mathcal{C}_p$.




\section{Discussion}\label{sec:discussion}


Our main result, Theorem~\ref{thm:intro}, determines the maximum
likelihood threshold of any linear structural equation model.  This
threshold is the smallest integer $N$ such that the Gaussian
likelihood function is a.s.~bounded for all samples of size at least
$N$.  According to our result, the maximum likelihood threshold of
models with feedback loops is surprisingly low.  Indeed, the maximum
likelihood threshold of any digraph, acyclic or not, is equal to the
maximum in-degree plus one.  In contrast, bidirected edges, which
represent the effects of unmeasured confounders, can result in a large
maximum likelihood threshold by merely forming long paths.  If
$\mathcal{G}$ is a bidirected spanning tree, then there are only $p-1$
edges yet $\obs{\mathcal{G}}=p$, which is the largest possible value
by Lemma~\ref{lem:saturated}(a).

When the structural equation model is given by an acyclic digraph,
boundedness of the likelihood function implies that the maximum is
achieved.  As we emphasized in Remark~\ref{rem:not-closed}, the
question of when the maximum is a.s.~achieved is still poorly
understood for general mixed graphs and constitutes an important topic
for future work.
	
\section*{Acknowledgments}

We would like to thank Caroline Uhler for helpful discussions.  This material is based upon work supported by the U.S. National Science Foundation under Grant No.~DMS 1712535.

\bibliographystyle{biometrika}
\small
\bibliography{MLE_Existence}	
	
\end{document}